\newtheorem{theorem}{Theorem}
\newtheorem{lemma}[theorem]{Lemma}
\theoremstyle{definition}
			{Definition}
\newtheorem{remark}{Remark}
\def\N{{\mathbb N}}
\def\Z{{\mathbb Z}} \def\Q{{\mathbb Q}}
\def\C{{\mathbb C}} \def\R{{\mathbb R}}
\def\@setcopyright{}
\def\serieslogo@{}
\begin{document}

%


\author{Takashi MIYAGAWA}      %


\title{Approximate functional equation and upper bounds for the Barnes double zeta-function}

\begin{abstract}
 As one of the asymptotic formulas of the zeta-function, Hardy and Littlewood 
 gave asymptotic formulas called the approximate functional equation.
 In this paper, we prove an approximate functional equation of the 
 Barnes double zeta-function 
 $ \zeta_2 (s, \alpha ; v, w ) = \sum_{m=0}^\infty \sum_{n=0}^\infty (\alpha+vm+wn)^{-s} $.
 Also, applying this approximate functional equation and the van der Corput method, 
 we obtain upper bounds for 
 $ \zeta_2(1/2 + it, \alpha ; v, w) $ and $ \zeta_2(3/2 + it, \alpha ; v, w) $
 with respect to $ t $ as $ t \rightarrow \infty $.
\end{abstract}

\subjclass[2010]{Primary 11M32; Secondary 11B06}
\keywords{Barnes double zeta-function, Approximate functional equation, Saddle point method,
	van der Corput method}

\maketitle

\dedicatory{}

\date{}


\maketitle


\section{Introduction and statement of results}
Let $ s = \sigma + it $ be a complex variable, and let 
$ \alpha > 0 $ and $ v, w > 0 $ be real parameters.

In this section, we introduce the approximate functional equations of the Riemann
zeta-function
\[
	\zeta(s) = \sum_{n=1}^\infty \frac{1}{n^s},
\]
some results of upper bounds for $ \zeta(1/2 + it) $ with respect to $ t \rightarrow \infty $,
Barnes multiple zeta-function, and give the main theorems on 
the approximate functional equation for the Barnes double zeta-function
\begin{equation}\label{B_2-zeta}
		\zeta_2 (s, \alpha ; v, w)
               = \sum_{m = 0}^\infty \sum_{n = 0}^\infty
                 \frac{1}{(\alpha + v m + w n)^s}.
\end{equation}

As a classical asymptotic formula of the Riemann zeta-function,
the following formula was proved by Hardy and Littlewood;
\[
		\zeta(s) = \sum_{n \leq x} \frac{1}{n^s} - \frac{x^{1-s}}{1-s} 
		+ O(x^{-\sigma}) \qquad (x \rightarrow \infty),
\]
uniformly for $ \sigma \geq \sigma_0 > 0,\ |t| < 2\pi x/C $, when $ C > 1 $ 
is a constant. This formula gives an indication in the discussion in the
critical strip of $ \zeta(s) $.
Also, Hardy and Littlewood proved the following asymptotic formula 
({\S}4 in \cite{Tit}); suppose that 
$ 0 \leq \sigma \leq 1,\ x \geq 1,\ y\geq 1 $ and $ 2\pi ixy = |t| $ then
\begin{equation}\label{AFE}
	\zeta(s) = \sum_{n \leq x} \frac{1}{n^s} + \chi(s) \sum_{n \leq y} \frac{1}{n^{1-s}}
	+ O(x^{-\sigma}) + O(|t|^{1/2-\sigma}y^{\sigma-1}),
\end{equation}
where $ \chi(s) = 2 \Gamma(1-s) \sin{(\pi s/2)}(2\pi)^{s-1} $ and note that the functional equation
$ \zeta(s) = \chi(s) \zeta(1-s) $ holes. This formula (\ref{AFE}) is called the
approximate functional equation.

The order of $ |\zeta(\sigma + it)| $ with respect to $ t $ is an extremely important problem
in the deeper theory of the Riemann zeta-fucntion $ \zeta(s) $. In particular, the order of 
$ |\zeta(1/2 + it)| $ is the most important. For example, Hardy-Littlewood improve to
\begin{equation}\label{Hardy-Littlewood}
	\zeta \left( \frac12 + it \right) \ll t^{1/6 + \varepsilon}
\end{equation}
by the van der Corput method, applying to (\ref{AFE}).
In 1988, $ \zeta(1/2 + it) \ll t^{9/56 + \varepsilon} $ was proved by Bombieri and Iwaniec,
after which many mathematicians gradually improved, and now 
$ \zeta(1/2 + it) \ll t^{32/205 + \varepsilon} $ has been proved by Huxley in 2005.
Furthermore in 2017, $ \zeta(1/2 + it) \ll t^{13/84 + \varepsilon} $ was proved by
Bourgain (in see \cite{Bou}).

Let $ r $ be a positive integer and $ w_j > 0 \ (j = 1, \ldots , r) $ are complex parameters.
The Barnes multiple zeta-function $ \zeta_r(s, \alpha ; w_1, \ldots , w_r) $ is defined by
\begin{equation}\label{B-zeta}
       	\zeta_r (s, \alpha ; w_1, \ldots , w_r)
               = \sum_{m_1 = 0}^\infty \cdots \sum_{m_r = 0}^\infty
                 \frac{1}{(\alpha + w_1 m_1 + \cdots + w_r m_r)^s}
\end{equation}
where the series on the right hand-side is 
absolutely convergence for $ \mathrm{Re}(s) > r $,
and is continued meromorphically to $ \C $ and its only singularities are 
the simple poles located at $ s = j \ (j = 1, \ldots , r) $.
This series is a multiple version of the Hurwitz zeta-function
\begin{equation}
	\zeta_H(s, \alpha) = \sum_{n=0}^\infty \frac{1}{(n+\alpha)^s}
	\qquad (0 < \alpha \leq 1).
\end{equation}
Also, as a generalization of this series in another direction, the Lerch zeta-function
\begin{equation}\label{zeta_L}
	\zeta_L(s, \alpha, \lambda) = \sum_{n=0}^\infty \frac{e^{2\pi in \lambda}}{(n+\alpha)^s}
	\qquad (0 < \alpha \leq 1, \ 0 < \lambda \leq 1)
\end{equation}
is also an important research subject. These series are absolutely convergent for $ \sigma > 1 $.
Also, if $ 0 < \lambda < 1 $, then the series (\ref{zeta_L}) is convergent even for $ \sigma > 0 $.


\begin{remark}
The Barnes double zeta-function was introduced by E. W. Barnes \cite{B1}
in the theory of double gamma function, and double series of the 
form (\ref{B_2-zeta}) is introduced in \cite{B2}. Furthermore in \cite{B3}, 
in connection with the theory of the multiple gamma function, and 
multiple series of the form (\ref{B-zeta}) was introduced.
\end{remark}

\bigskip
\begin{theorem}[Theorem 3 in \cite{TM2}]\label{order_of_Barnes_zeta}
Let $ 0 < \sigma_1 < \sigma_2 < 2,\ x \geq 1 $ and $ C > 1 $.
Suppose $ s = \sigma + it \in \C $ with $ \sigma_1 < \sigma < \sigma_2 $
and $ |t| \leq 2\pi x/C $. Then
\begin{eqnarray}
	&& \zeta_2(s,\alpha;v,w) = \mathop{\sum\sum}\limits_{0 \leq m,n \leq x} 
				\frac{1}{(\alpha + vm + wn)^s}		\nonumber \\
	&& \qquad\qquad	+ \frac{(\alpha + vx)^{2-s} + (\alpha + wx)^{2-s} - (\alpha + vx + wx)^{2-s}}
				{vw(s-1)(s-2)}  + O(x^{1-\sigma} )	  
\end{eqnarray}
as $ x \rightarrow \infty $. 
\end{theorem}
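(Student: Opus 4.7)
My plan is to extend the classical one-dimensional Hardy--Littlewood approximate formula for the Hurwitz zeta-function to the double series by iterating the approximation once in each summation variable. The central ingredient is that, uniformly for $\sigma_1<\sigma<\sigma_2$ and $|t|\le 2\pi x/C$, the Hurwitz-type tail admits
\[
 \sum_{n=0}^{\infty} (\alpha + vm + wn)^{-s} = \sum_{n\le x} (\alpha + vm + wn)^{-s} + \frac{(\alpha+vm+wx)^{1-s}}{w(s-1)} + O\!\left((\alpha+vm+wx)^{-\sigma}\right),
\]
which follows by applying the classical Hardy--Littlewood formula to $w^{-s}\zeta_H(s,(\alpha+vm)/w)$; the analogous statement with the roles of $m,v$ and $n,w$ exchanged is also at our disposal.

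\textbf{Main steps.}
I would split the double series as
\[
 \zeta_2(s,\alpha;v,w) = \mathop{\sum\sum}_{0\le m,n\le x} + \sum_{m\le x}\sum_{n>x} + \sum_{m>x}\sum_{n\le x} + \sum_{m>x}\sum_{n>x},
\]
identifying the first piece with the truncated double sum in the statement. Applying the inner Hardy--Littlewood formula to the second piece yields $\sum_{m\le x}\frac{(\alpha+vm+wx)^{1-s}}{w(s-1)}$ together with an error $O\bigl(\sum_{m\le x}(\alpha+vm+wx)^{-\sigma}\bigr)=O(x^{1-\sigma})$; Euler--Maclaurin summation then converts the remaining finite sum into
\[
 \frac{(\alpha+wx)^{2-s}-(\alpha+vx+wx)^{2-s}}{vw(s-1)(s-2)} + O(x^{1-\sigma}).
\]
The third piece is handled symmetrically, and the corner piece $\sum_{m>x}\sum_{n>x}$ is treated by two successive Hurwitz-type approximations, producing the contribution $\frac{(\alpha+vx+wx)^{2-s}}{vw(s-1)(s-2)}$. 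Summing these three boundary contributions yields precisely
\[
 \frac{(\alpha+vx)^{2-s}+(\alpha+wx)^{2-s}-(\alpha+vx+wx)^{2-s}}{vw(s-1)(s-2)},
\]
with cumulative error $O(x^{1-\sigma})$.

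\textbf{Main obstacle.}
The principal technical difficulty is the uniform control of the error terms throughout the strip $0<\sigma<2$, and especially when $\sigma\le 1$, where the naive Hardy--Littlewood error $O((\alpha+vm+wx)^{-\sigma})$ summed over $m>x$ would fail to converge absolutely. My strategy would be to first establish the identity rigorously for $\mathrm{Re}(s)>2$, where every tail sum is absolutely convergent and the Euler--Maclaurin remainders are harmless, and then to extend it to $\sigma_1<\sigma<\sigma_2$ by analytic continuation: both sides are meromorphic in $s$ with poles only at $s=1,2$, the poles being visible in the denominator of the main term. The uniformity of the error bound $O(x^{1-\sigma})$ across the strip is then recovered by a careful integration by parts in the Euler--Maclaurin remainders, exploiting the hypothesis $|t|\le 2\pi x/C$ in the same manner as in the proof of the classical one-dimensional Hardy--Littlewood formula.
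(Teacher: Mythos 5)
A preliminary remark: the paper you are reading does not prove this statement at all --- it is imported verbatim as Theorem~3 of \cite{TM2} and used as a black box, so your sketch can only be measured against the standard argument such a result requires, not against a proof printed in this source. With that caveat, your architecture and main-term bookkeeping are correct: splitting at $m,n\le x$, the inner Hardy--Littlewood step turns the piece $\{m\le x,\ n>x\}$ into $\frac{1}{w(s-1)}\sum_{m\le x}(\alpha+vm+wx)^{1-s}+O(x^{1-\sigma})$, a second Euler--Maclaurin step gives $\frac{(\alpha+wx)^{2-s}-(\alpha+vx+wx)^{2-s}}{vw(s-1)(s-2)}$ (and here the factor $(1-s)$ produced by differentiating $(\alpha+vu+wx)^{1-s}$ cancels the prefactor $1/(s-1)$, so even the trivial bound on the Euler--Maclaurin remainder is $O(x^{1-\sigma})$), the mirror piece and the corner piece contribute $\frac{(\alpha+vx)^{2-s}-(\alpha+vx+wx)^{2-s}}{vw(s-1)(s-2)}$ and $\frac{(\alpha+vx+wx)^{2-s}}{vw(s-1)(s-2)}$ respectively, and the three add up to exactly the stated main term. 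This is surely the intended route.

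The genuine gap is in how you propose to reach $\sigma\le 1$. First, ``establish the identity for $\mathrm{Re}(s)>2$ and extend by analytic continuation'' cannot transfer the bound: an $O(x^{1-\sigma})$ estimate is not an analytic object and does not continue. What continues is an identity in which the remainder is written as an explicit function $R(s,x)$, meromorphic in $s$; the estimate $R(s,x)\ll x^{1-\sigma}$ must then be proved afresh, directly in the strip. Second, and more concretely, the step that fails if your plan is executed as written is the corner piece: applying the one-variable approximation to the inner sum and then summing the errors $O\bigl((\alpha+vm+wx)^{-\sigma}\bigr)$ over $m>x$ diverges for $\sigma\le 1$, so the errors cannot be handled termwise. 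The repair is to perform Euler--Maclaurin in \emph{both} variables before estimating anything, which produces a remainder of the shape $s(s+1)\int_x^\infty\!\!\int_x^\infty(\{u\}-\tfrac12)(\{u'\}-\tfrac12)(\alpha+vu+wu')^{-s-2}\,du\,du'$ plus one-dimensional boundary integrals; these converge and furnish the analytic continuation for $\sigma>0$, and the hypothesis $|t|\le 2\pi x/C$ enters precisely by guaranteeing $|s|\,(\alpha+vu+wu')^{-1}\ll 1$ throughout the range, so each differentiation costs nothing and the whole remainder is $\ll x^{1-\sigma}$ (indeed $\ll x^{-\sigma}$ for the corner). Your closing sentence gestures at this, but as the sketch stands the divergent termwise summation is a step that would fail, and the continuation argument is not a substitute for it.
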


We prove an analogue of the approximate functional equation (\ref{AFE}) for (\ref{B_2-zeta})
 (in Theorem \ref{th:Main_Theorem1}).
In the following theorem, the results when the complex parameter $ v,w $
linearly independent are different from the results when $ v, w $ are linearly dependent 
over $ \Q $.
Also, we consider the upper bounds of $ \zeta_2(1/2 + it,\alpha;v,w) $ and 
$ \zeta_2(3/2 + it, \alpha; v,w) $, and the following Theorem \ref{th:Main_Theorem2}
and Theorem \ref{th:Main_Theorem3} were obtaind. Proof of Theorem \ref{th:Main_Theorem2},
can be obtained by using Theorem \ref{th:Main_Theorem1}, and proof of 
Theorem \ref{th:Main_Theorem3} can be obtained by using Theorem \ref{order_of_Barnes_zeta}.

\begin{theorem}\label{th:Main_Theorem1}
Suppose that $ 0 \leq \sigma \leq 2, x = x(t) \geq 1, y=y(t) \geq x(t) $ and $ 2\pi xy = |t| $. 
Let $ L,M,N $ are non-negative integer as satisfying 
$ N = \left[ x/(v+w) \right] $ and $ L = [vy], M = [wy] $.
\begin{enumerate}
	\item[$(i)$]
	If $ v, w $ are linearly independent over $ \mathbb{Q} $;
	\begin{eqnarray}
		&& \zeta_2(s, \alpha; v,w)		\nonumber \\
		&& = \mathop{\sum\sum}\limits_{0 \leq m,n \leq N} 															\frac{1}{(\alpha + vm + wn)^s}
			+ \frac{1}{w^s}\sum_{m=0}^N \zeta_H^*(s,\alpha_{v,m}) 
			+ \frac{1}{v^s}\sum_{n=0}^N \zeta_H^*(s, \alpha_{w,n})	\nonumber \\
		&& \ \ - \frac{\Gamma(1-s)}{(2\pi i)^{1-s}e^{\pi is}}\left\{ \frac{1}{v^s} 
			\sum_{0 < |n| < L} \frac{e^{-2\pi in(\alpha+wN)/v}}
					{(e^{2\pi inw/v}-1)n^{1-s}} + \frac{1}{w^s}
			\sum_{0 < |n| < M} \frac{e^{-2\pi in(\alpha+vN)/w}}
					{(e^{2\pi inv/w}-1)n^{1-s}} \right\} \nonumber \\
		&& \ \ + O(x^{-\sigma}) 	\label{Thm1(i)}
		\end{eqnarray}
	\item[$(ii)$]
	If $ v, w $ are linearly dependent over $ \mathbb{Q} $,
	exist $ p, q \in \mathbb{N} $ such as $ pv=qw $ and $ (p,\;q)=1 $. Then we have
	\begin{eqnarray}
		&& \zeta_2(s, \alpha; v,w)		\nonumber \\
		&&  = \mathop{\sum\sum}\limits_{0 \leq m,n \leq N} 
			\frac{1}{(\alpha + vm + wn)^s}
			+ \frac{1}{w^s}\sum_{m=0}^N \zeta_H^*(s,\alpha_{v,m}) 
			+ \frac{1}{v^s}\sum_{n=0}^N \zeta_H^*(s, \alpha_{w,n})		\nonumber \\
		&& \ \ - \frac{\Gamma(1-s)}{(2\pi i)^{1-s}e^{\pi is}}
                	\left\{ \frac{1}{v^s} 
			\mathop{\sum_{0 < |n| < L}}\limits_{q \; \mid \hspace{-.25em}/ \, n} 
			\frac{e^{-2\pi in(\alpha+wN)/v}}
				{(e^{2\pi inw/v}-1)n^{1-s}} + \frac{1}{w^s}
			\mathop{\sum_{0 < |n| < M}}\limits_{p \; \mid \hspace{-.25em}/ \, n} 
			\frac{e^{-2\pi in(\alpha+vN)/w}}
				{(e^{2\pi inv/w}-1)n^{1-s}}  \right. \nonumber \\
		&& \qquad
                	+ \frac{q^{s-1}}{2\pi ipv^s}(1-s) \sum_{0 < |n| < M} \frac{e^{-2q\pi in\alpha/v}}{n^{2-s}} 
                        \nonumber \\
                && \qquad	
                	- \left. \left( \frac{\alpha q}{pv^2} + \frac{(p+q)N}{pv} + \frac{vp}{2q} + \frac{v}2 \right) 
                	\left( \frac{q}{v} \right)^{s-1} \sum_{0 < |n| < M} \frac{e^{-2q\pi in\alpha /v}}{n^{1-s}} 
                        	 	\vbox to 35pt{} \right\} \nonumber \\
                && \ \  + O(x^{-\sigma}),
		\end{eqnarray}
	\end{enumerate}
	where
	\begin{eqnarray*}
		&& \zeta_H^* (s, \alpha_{v,m}) := \zeta_H(s, \alpha_{v,m}) 
			- \sum_{n=0}^{N+n_{v,m}} \frac{1}{(n + \alpha_{v,m})^s},	\\
		&& \alpha_{v,m} := 
		\begin{cases}
			\left\{ \dfrac{vm+\alpha}{w} \right\}  & 
			\Bigl(\dfrac{vm+\alpha}{w} \notin \mathbb{N} \Bigr), \\
			\quad 1 & \Bigl(\dfrac{vm+\alpha}{w} \in \mathbb{N} \Bigr),
		\end{cases}	\\
		&& n_{v,m} := 
		\begin{cases}
			\left[ \dfrac{vm+\alpha}{w} \right] - 1 & \Bigl(\dfrac{vm+\alpha}{w} \geq 1 \Bigr), \\
			\quad 0 & \Bigl( 0 < \dfrac{vm+\alpha}{w} < 1 \Bigr).
		\end{cases}
	\end{eqnarray*}
        The definitions of $ \zeta_H^*(s, \alpha_{w,n})$ and $ \alpha_{w,n} $ are similar.
	
\end{theorem}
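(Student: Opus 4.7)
The plan is to split the double sum defining $\zeta_2(s,\alpha;v,w)$ into four regions determined by $N=[x/(v+w)]$: a ``main box'' $\{0\leq m,n\leq N\}$, two ``strips'' $\{0\leq m\leq N,\ n>N\}$ and $\{m>N,\ 0\leq n\leq N\}$, and a ``corner'' $\{m,n>N\}$. The main box is exactly the first term of the statement. In the first strip, with $m$ fixed, the inner sum $\sum_{n>N}(\alpha+vm+wn)^{-s}=w^{-s}\sum_{n>N}(n+(\alpha+vm)/w)^{-s}$ is, after absorbing the integer part via $n_{v,m}$ and the fractional part via $\alpha_{v,m}$, exactly $w^{-s}\zeta_H^*(s,\alpha_{v,m})$; summing over $0\leq m\leq N$ yields the first $\zeta_H^*$-term, and the second strip is handled symmetrically.

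The crux is the corner $S=\sum_{m,n>N}(\alpha+vm+wn)^{-s}$. The plan is to apply the Hurwitz approximate functional equation (derived from Hurwitz's formula together with the saddle point method on the resulting Mellin--Barnes integral) to the inner $m$-sum after factoring out $v^{-s}$, with the Lerch-side cutoff chosen so that the relation $2\pi(x/v)(vy)=|t|$ places the truncation at $L=[vy]$. This converts the inner sum into a truncated Lerch expression of length $L$ weighted by $\Gamma(1-s)/((2\pi i)^{1-s}e^{\pi is})$, with aggregate error $O(x^{-\sigma})$ after summation in $n$. Exchanging orders, the outer sum $\sum_{n>N}e^{-2\pi iknw/v}$ is a geometric series evaluating to $e^{-2\pi ikwN/v}/(e^{2\pi ikw/v}-1)$; combined with the $e^{-2\pi ik\alpha/v}$ phase from the $\alpha$-shift this produces the $e^{-2\pi in(\alpha+wN)/v}/[(e^{2\pi inw/v}-1)n^{1-s}]$ term of case~(i). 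The symmetric partner is obtained by factoring $w^{-s}$ instead and applying Hurwitz's formula in the $n$-variable, with cutoff $M=[wy]$.

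Case~(ii), where $pv=qw$, differs only in the final geometric summation: indices $n$ with $p\mid n$ (resp.\ $q\mid n$) satisfy $e^{2\pi inw/v}=1$, so the denominator $e^{2\pi inw/v}-1$ vanishes. For non-resonant $n$ the argument of case~(i) carries over verbatim and yields the two restricted sums with $p\nmid n$ and $q\nmid n$. For a resonant frequency $n=pk$ the inner exponential in the outer sum is constant, so the divergent closed form must be replaced by a direct evaluation of $\sum_{N<n\leq y}$ via Euler--Maclaurin; the linear-in-$n$ part produces the $(1-s)/n^{2-s}$ term with prefactor $q^{s-1}/(2\pi ipv^s)$, while the constant part together with the endpoint and trapezoidal corrections combine into the $1/n^{1-s}$ contribution with the explicit coefficient $\alpha q/(pv^2)+(p+q)N/(pv)+vp/(2q)+v/2$, each summand traceable to a specific source ($\alpha$-shift, boundary at $m=N$, and the trapezoidal/midpoint rules).

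The principal obstacle is not any single saddle-point estimate but the uniform control of errors over $0\leq\sigma\leq 2$ combined with the combinatorial bookkeeping of case~(ii): the explicit rational coefficient in the resonant term must exactly match the sum of contributions from the $\alpha$-shift, the boundary at $m=N$ or $n=N$, and the Euler--Maclaurin corrections from both resonant strips, and verifying this matching---rather than the estimation of the residual integrals themselves---is where the heaviest work lies.
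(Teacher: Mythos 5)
Your decomposition into the main box, the two strips, and the corner, and your identification of the strip contributions with the $\zeta_H^*$-terms, coincide exactly with the paper's first step. The treatment of the corner sum $\sum_{m,n>N}(\alpha+vm+wn)^{-s}$ is where your route departs from the paper's and where it has genuine gaps. The paper does not iterate the one-variable approximate functional equation; it writes the corner sum as a single Hankel-type contour integral
\[
	\frac{\Gamma(1-s)}{2\pi i\, e^{\pi i s}}\int_C \frac{z^{s-1}e^{-(\alpha+vN+wN)z}}{(e^{vz}-1)(e^{wz}-1)}\,dz ,
\]
valid for all $s$ by analytic continuation, deforms $C$ to a polygonal path through the saddle point $z=i\eta$, $\eta=2\pi y$, and obtains \emph{both} families of exponential sums simultaneously as residues at the two families of poles $z=2\pi in/v$ ($|n|<L$) and $z=2\pi in/w$ ($|n|<M$); the four residual line integrals are then shown to be $O(x^{-\sigma})$ using $\Gamma(1-s)\ll t^{1/2-\sigma}e^{\pi t/2}$. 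In case (ii) the coincident poles $z=2\pi iqn/v=2\pi ipn/w$ become double poles, and the extra terms — the $(1-s)$ factor and the explicit rational coefficient — fall out of the derivative in the double-pole residue formula, with no Euler--Maclaurin bookkeeping.

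Concrete problems with your plan for the corner. (1) For $0\le\sigma\le 2$ the row-by-row expression diverges: the continued inner tail $\sum_{m>N}(m+(\alpha+wn)/v)^{-s}$ has main term $\asymp (wn/v)^{1-s}/(s-1)$, so the outer $n$-sum of these continuations converges only for $\sigma>2$; you need a global analytic continuation of the corner sum before manipulating it, which is precisely what the contour integral supplies. (2) Each application of the Hurwitz AFE costs $O(x^{-\sigma})+O(|t|^{1/2-\sigma}y^{\sigma-1})$; you apply it for every $n>N$, i.e.\ infinitely often, and these errors do not decay in $n$, so the claimed aggregate error $O(x^{-\sigma})$ is unsubstantiated. (3) Both dual families (the $L$-truncated sum with denominator $e^{2\pi inw/v}-1$ and the $M$-truncated sum with denominator $e^{2\pi inv/w}-1$) must emerge from the one corner sum; applying the AFE ``in the $m$-variable'' and then ``symmetrically in the $n$-variable'' to the same object double-counts, and your sketch does not identify the residual piece on which the second application is supposed to act. (4) In the resonant case of (ii) the outer geometric series degenerates into a divergent constant series; truncating it at $y$ is ad hoc, whereas in the paper both the cutoff and the coefficient $\alpha q/(pv^2)+(p+q)N/(pv)+vp/(2q)+v/2$ come automatically from the double-pole residues enclosed by the deformed contour. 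Until these four points are repaired, the proposal does not establish the theorem.
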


\bigskip
\bigskip


\begin{theorem}\label{th:Main_Theorem2}
If $ v,w $ are linearly independent over $ \Q $, then we have
\[
	\zeta_2\left(\frac12+it, \alpha ; v,w \right) \ll 
	\begin{cases}
		|t|^{1/6}			& (\alpha, v, w \ are \ lin. \ indep. \ over \ \Q),	\\
		|t|^{1/6} \log{|t|} & (\alpha, v, w \ are \ lin. \ dep. \ over \ \Q).
	\end{cases}
\]
If $ v,w $ are linearly dependent over $ \Q $, exist $ p,q \in \N $ such as $ pv=qw $ and $ (p,q)=1 $.
Then we have
\[
	\zeta_2\left(\frac12+it, \alpha ; v,w \right) = \kappa t + O(|t|^{1/6} \log{|t|})
\]
where $ \kappa = \kappa(t) $ is a constant with
\[
	0 < |\kappa(t)| < \frac{1}{2\pi p \sqrt{qv}} \left| \zeta_L \left( \frac12+it, 1, 1 - \frac{q}{v}\alpha \right) \right|.
\]
\end{theorem}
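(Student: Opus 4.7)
\textbf{Proof proposal for Theorem~\ref{th:Main_Theorem2}.}

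The plan is to invoke Theorem~\ref{th:Main_Theorem1} at $s = 1/2 + it$ with the balanced choice $x = y = \sqrt{|t|/(2\pi)}$, so that $2\pi xy = |t|$ and
\[
 N = \left[\frac{x}{v+w}\right] \asymp |t|^{1/2}, \qquad L = [vy] \asymp |t|^{1/2}, \qquad M = [wy] \asymp |t|^{1/2}.
\]
With this choice the remainder $O(x^{-\sigma}) = O(|t|^{-1/4})$ is absorbed, and the prefactor $\Gamma(1-s)/((2\pi i)^{1-s} e^{\pi is})$ is of size $O(1)$ by Stirling. It then remains to dominate three families of contributions: (A) the truncated double Dirichlet sum $\sum\sum_{0\leq m,n\leq N}(\alpha + vm + wn)^{-s}$; (B) the two sums of Hurwitz tails $w^{-s}\sum_{m}\zeta_H^*(s,\alpha_{v,m})$ and $v^{-s}\sum_{n}\zeta_H^*(s,\alpha_{w,n})$; and (C) the oscillatory sums over $n$ with denominators $e^{2\pi i n w/v}-1$ and $e^{2\pi i n v/w}-1$, together with (in case (ii)) the additional explicit expressions arising from $pv = qw$.

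For (A) I would write each summand as $(\alpha+vm+wn)^{-1/2}\exp(-it\log(\alpha+vm+wn))$ and apply the van der Corput method: handling one variable by the first-derivative test and the exponent pair $(1/6, 2/3)$, then summing trivially in the other, yields $O(|t|^{1/6})$, in direct analogy with the Hardy--Littlewood derivation of (\ref{Hardy-Littlewood}). For (B), each $\zeta_H^*(s,\alpha_{v,m})$ is by construction the tail of a Hurwitz zeta-series; the approximate functional equation for $\zeta_H$ replaces it by a short exponential sum of length $\asymp |t|^{1/2}/(N+n_{v,m}+1)$, and van der Corput applied to the resulting double sum over $m$ also contributes $O(|t|^{1/6})$. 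A logarithmic loss appears precisely when the phases $\alpha_{v,m}$ cluster, i.e.\ when $\alpha,v,w$ are linearly dependent over $\Q$, which is exactly what distinguishes the two sub-cases in (i).

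For (C) in case (i), the denominator $|e^{2\pi i n w/v}-1| = 2|\sin(\pi n w/v)|$ is non-zero, and its reciprocal is controlled on average via the irrationality of $w/v$; the resulting exponential sum in $n$ is standard van der Corput and contributes $O(|t|^{1/6})$, or $O(|t|^{1/6}\log|t|)$ in the arithmetic sub-case. In case (ii), the integers with $q\mid n$ (resp.\ $p\mid n$) are excised and the compensating terms are displayed explicitly in Theorem~\ref{th:Main_Theorem1}(ii). The decisive one is
\[
 \frac{q^{s-1}(1-s)}{2\pi i p v^s}\sum_{0<|n|<M}\frac{e^{-2q\pi i n\alpha/v}}{n^{2-s}},
\]
where at $s = 1/2+it$ the factor $(1-s) = 1/2 - it$ has modulus $\asymp |t|$ while the sum itself is $O(1)$. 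A careful Abel summation converts $\sum e^{-2q\pi i n\alpha/v}/n^{2-s}$ into a Lerch-zeta value $\zeta_L(1/2+it,\,1,\,1-q\alpha/v)$ up to lower-order corrections, producing the main term $\kappa t$ and the stated bound on $|\kappa|$. The remaining terms of Theorem~\ref{th:Main_Theorem1}(ii), including the one with the polynomial factor $\bigl(\alpha q/(pv^2)+\cdots\bigr)$, absorb into the $O(|t|^{1/6}\log|t|)$ error by trivial or van der Corput estimates.

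The main obstacle will be part (C): first, uniformly controlling the small denominators $|\sin(\pi n w/v)|$ across the full range $0<|n|<L$ in case (i), which is where the distinction between linearly independent and linearly dependent $\alpha,v,w$ is felt; and second, performing the Abel summation in case (ii) cleanly enough to expose the clean $\kappa t$ main term while keeping the cross terms inside the $O(|t|^{1/6}\log|t|)$ budget. Everything else is a (technical but standard) application of the van der Corput method, paralleling the route from (\ref{AFE}) to (\ref{Hardy-Littlewood}).
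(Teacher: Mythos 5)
Your decomposition into (A) the truncated double sum, (B) the Hurwitz tails, and (C) the oscillatory/residue sums matches the paper's $S+T_1+T_2+U_1+U_2$ (and $V_1,V_2$ in case (ii)), and your identification of the term $(1-s)q^{s-1}(2\pi i p v^s)^{-1}\sum_{0<|n|<M}e^{-2q\pi i n\alpha/v}n^{-(2-s)}$ as the source of the main term $\kappa t$ is exactly right. But there is a genuine gap at the very first step: the balanced choice $x=y=\sqrt{|t|/(2\pi)}$ cannot work. With $N\asymp|t|^{1/2}$ the double sum (A) has $\asymp|t|$ terms, and applying the third-derivative test in one variable and then summing over the other gives (this is precisely the paper's computation culminating in (\ref{evaluate_S})) $S\ll |t|^{1/6}x\log x+|t|^{-1/6}x^{3/2}$, which for $x\asymp|t|^{1/2}$ is of order $|t|^{2/3}\log|t|$ --- nowhere near $|t|^{1/6}$. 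Even in the model case $v=w=1$ the double sum collapses to roughly $\sum_{k\le 2N}k^{1/2-it}$, which one-dimensional van der Corput only bounds by about $|t|^{2/3}$; "summing trivially in the other variable" costs a factor $N\asymp|t|^{1/2}$ that you cannot recover. The same objection applies to your treatment of (B), where you again invoke van der Corput on a double sum of length $\asymp|t|^{1/2}$ in $m$.

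The paper's proof instead makes a strongly unbalanced choice: $x=C$ a constant in the $(\alpha,v,w)$-independent case, $x=2\pi(\log t)^{1/2}$ in the dependent case, with $y\asymp|t|/x$. Then $S$ and the number of Hurwitz tails are both $O(x)$, and essentially all the weight sits in the individual tails $\zeta_H^*(\tfrac12+it,\alpha_{v,m})$. Each of these is rewritten via the approximate functional equation for $\zeta_H$ (Lemma \ref{AFE_of_zeta_H}) as $\ll\bigl|\sum_{n\le y}e^{2\pi i n\alpha_{v,m}}n^{-1/2+it}\bigr|$, which is $O(1)$ uniformly when $0<\alpha_{v,m}<1$ (Lerch-type convergence at $\sigma=\tfrac12$ with nonintegral phase) and $O(|t|^{1/6}\log y)$ when $\alpha_{v,m}=1$ (Lemma \ref{Hardy-Littlewood_zeta_H}). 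The dichotomy in the statement is therefore not about the phases $\alpha_{v,m}$ "clustering" but about whether the single exceptional index with $(\alpha+vm)/w\in\N$ exists at all: linear independence of $v,w$ forces at most one such $m$, and it exists precisely when $\alpha,v,w$ are linearly dependent over $\Q$, contributing the lone $|t|^{1/6}\log|t|$. Finally, the sums (C) are not treated by van der Corput or by averaging over small denominators: the paper bounds $|e^{2\pi i nw/v}-1|^{-1}$ uniformly (the excluded multiples of $p$ and $q$ in case (ii) remove the genuine poles) and dominates $U_1+U_2$ by absolutely convergent Lerch series, getting $O(1)$. So your architecture is right, but the one quantitative decision the proof turns on --- the choice of $x$ and $y$ --- is made in a way that loses a power of $t$ in step (A).
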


\medskip
\medskip

\begin{theorem}\label{th:Main_Theorem3}
\[
	\zeta_2 \left(\frac32+it, \alpha ; v,w \right) \ll |t|^{1/3}.
\]
\end{theorem}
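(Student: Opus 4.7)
The plan is to apply Theorem \ref{order_of_Barnes_zeta} at $s=3/2+it$ with $x$ taken of order $|t|$, specifically $x = C|t|/(2\pi)$ (admissible because the hypothesis $|t| \leq 2\pi x/C$ is then satisfied). At $\sigma = 3/2$ the explicit quotient
\[
\frac{(\alpha+vx)^{2-s}+(\alpha+wx)^{2-s}-(\alpha+vx+wx)^{2-s}}{vw(s-1)(s-2)}
\]
has numerator of modulus $O(x^{1/2})$ and denominator of modulus $\asymp |t|^2$ (since $(s-1)(s-2)=-\tfrac14-t^2$), so it contributes $O(|t|^{-3/2})$. The stated error $O(x^{1-\sigma})=O(|t|^{-1/2})$ is likewise absorbed. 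Thus it suffices to prove that
\[
S \;=\; \sum_{0 \leq m \leq x}\sum_{0 \leq n \leq x}\frac{1}{(\alpha+vm+wn)^{3/2+it}}
\]
is $O(|t|^{1/3})$.

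For fixed $m$, I would write $A_m := \alpha+vm$ and examine the inner sum $\sum_{n \leq x}(A_m+wn)^{-3/2-it}$ via the van der Corput third derivative test applied to the phase $f(n) = -t\log(A_m+wn)/(2\pi)$. Since $f'''(n) \asymp t/(A_m+wn)^3$ is of one sign, on any dyadic block $n \asymp N$ one obtains, after using Abel summation to absorb the slowly varying amplitude $(A_m+wn)^{-3/2}$,
\[
\biggl|\sum_{n \asymp N}(A_m+wn)^{-3/2-it}\biggr|
\;\ll\; \frac{N\,|t|^{1/6}}{(A_m+wN)^{2}} \;+\; \frac{N^{1/2}}{(A_m+wN)\,|t|^{1/6}},
\]
to be taken as the minimum against the trivial bound $N(A_m+wN)^{-3/2}$. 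The first van der Corput term, summed dyadically in $N$, yields $O(|t|^{1/6}/(wA_m))$ (the supremum being attained at $N \asymp A_m/w$), so summing over $m \leq x$ produces $O(|t|^{1/6}\log|t|)$. The second term summed dyadically in $N$ gives $O(|t|^{-1/6}/\sqrt{wA_m})$, and summation over $m \leq x \asymp |t|$ produces $O(|t|^{-1/6}\sqrt{|t|}) = O(|t|^{1/3})$. The latter dominates, proving $S \ll |t|^{1/3}$ and hence the theorem.

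The main technical obstacle will be the dyadic bookkeeping: one has to track the transition at $N \asymp A_m/w$ where the size of the summand changes regime, verify the sign and size hypotheses of van der Corput uniformly on each dyadic block, and revert to the trivial bound in the narrow ranges of $(m,N)$ where van der Corput offers no improvement. This is most delicate for small $m$ with $A_m = O(1)$, where however the inner sum is absolutely convergent and contributes only $O(1)$ per $m$, and for small dyadic blocks where one invokes the trivial estimate directly; since the claim is asymptotic, one may assume $|t|$ large enough to make $x \geq 1$ in Theorem \ref{order_of_Barnes_zeta}.
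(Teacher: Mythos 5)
Your proposal is correct and follows essentially the same route as the paper: both invoke Theorem \ref{order_of_Barnes_zeta} at $s=\tfrac32+it$ with $x\asymp|t|$, dispose of the explicit quotient and the $O(x^{-1/2})$ error trivially, and bound the remaining double sum by applying the third-derivative test (Lemma \ref{f'''}) to the inner sum over $n$ on dyadic blocks, removing the amplitude $(\alpha+vm+wn)^{-3/2}$ by partial summation and then summing over $m$ to get $\ll|t|^{1/6}\log|t|+|t|^{1/3}$. Your tracking of the block-dependent size $(A_m+wN)$ in the van der Corput hypotheses is in fact somewhat more careful than the paper's, but the argument is the same.
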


\medskip

\section{Proof of theorem \ref{th:Main_Theorem1}}

In this section, we give the proof of Theorem \ref{th:Main_Theorem1}.

\medskip
\textbf{Proof of Theorem \ref{th:Main_Theorem1}}. 

Let $ N \in \N $ be sufficiently large. Then we consider
\begin{eqnarray*}
	&& \zeta_2(s, \alpha; v,w)		\\
	&& \quad =\sum_{m=0}^\infty \sum_{n=0}^\infty 
			\frac{1}{(\alpha + v m + w n)^s}	\\			
	&& \quad = \left( \sum_{m=0}^N \sum_{n=0}^N 
				+ \sum_{m=0}^N \sum_{n=N+1}^\infty + \sum_{m=N+1}^\infty \sum_{n=0}^N 
				+ \sum_{m=N+1}^\infty \sum_{n=N+1}^\infty \right)
				\frac{1}{(\alpha + v m + w n)^s}.
\end{eqnarray*}
Also, the second term of the above is 
\begin{eqnarray*}
	&& \sum_{m=0}^N \sum_{n=N+1}^\infty \frac{1}{(\alpha + v m + w n)^s} \\
	&& \quad= \sum_{m=0}^N \left( \sum_{n=-n_{v,\alpha}}^\infty - \sum_{n=-n_{v,\alpha}}^N \right)
			\frac{1}{w^s \{n + (\alpha + v m)/w \}^s}	\\			
	&& \quad= \frac1{w^s} \sum_{m=0}^N \left\{ \zeta_H (s, \alpha_{v,m}) - \sum_{n=-n_{v,\alpha}}^N 
        	\frac{1}{\{n + (\alpha + v m)/w \}^s}  \right\} \\
        && \quad= \frac1{w^s} \sum_{m=0}^N \zeta_H^* (s, \alpha_{v,m}).
\end{eqnarray*}
Similarly, the third term of the above, we have
\[
	\sum_{m=N+1}^\infty \sum_{n=0}^N \frac{1}{(\alpha + v m + w n)^s} 
        = \frac1{v^s} \sum_{n=0}^{N} \zeta_H^* (s, \alpha_{w,n}).
\]
Transform the fourth term on the right hand-side of the above equation to the
contour integral to obtain
\begin{eqnarray}\label{contour_of_Barnes}
	\sum_{m=N+1}^\infty \sum_{n=N+1}^\infty \frac{1}{(\alpha + v m + w n)^s}
	= \frac{\Gamma(1-s)}{2\pi i e^{\pi is}}
		\int_C \frac{z^{s-1}e^{-(\alpha + vN + wN)z}}{(e^{vz}-1)(e^{wz}-1)} dz
\end{eqnarray}
where $ C $ is the contour integral path that comes from $ +\infty $ to $ \varepsilon $ along 
the real axis, then continues along the circle of radius $ \varepsilon $ counter clockwise, and finally
goes from $ \varepsilon $ to $ +\infty $.

\medskip

\begin{center}

{\unitlength 0.1in%
\begin{picture}( 30.9400,  5.4000)(  6.5000,-10.6000)%
%
\special{pn 13}%
\special{pa 2781 742}%
\special{pa 2361 742}%
\special{fp}%
\special{sh 1}%
\special{pa 2361 742}%
\special{pa 2428 762}%
\special{pa 2414 742}%
\special{pa 2428 722}%
\special{pa 2361 742}%
\special{fp}%
%
\special{pn 8}%
\special{pa 942 790}%
\special{pa 3744 790}%
\special{fp}%
%
\put(34.0000,-10.6000){\makebox(0,0)[lb]{}}%
\put(38.7500,-7.9500){\makebox(0,0){$ \mathrm{Re} $}}%
%
\special{pn 13}%
\special{pa 2002 838}%
\special{pa 2432 838}%
\special{fp}%
\special{sh 1}%
\special{pa 2432 838}%
\special{pa 2365 818}%
\special{pa 2379 838}%
\special{pa 2365 858}%
\special{pa 2432 838}%
\special{fp}%
\put(9.0000,-8.8000){\makebox(0,0){O}}%
%
\special{pn 13}%
\special{ar 940 790 270 270  0.1973956  6.1000745}%
%
\special{pn 13}%
\special{pa 670 778}%
\special{pa 670 828}%
\special{fp}%
\special{sh 1}%
\special{pa 670 828}%
\special{pa 690 761}%
\special{pa 670 775}%
\special{pa 650 761}%
\special{pa 670 828}%
\special{fp}%
\special{pa 670 828}%
\special{pa 670 828}%
\special{fp}%
%
\special{pn 13}%
\special{pa 1206 742}%
\special{pa 3715 742}%
\special{fp}%
%
\special{pn 13}%
\special{pa 1206 838}%
\special{pa 3710 838}%
\special{fp}%
\end{picture}}%

\end{center}

\medskip

Let $ \sigma \leq 2, t>0 $ and $ 1 \leq x < y $, so that 
$ 1 \leq x \leq \sqrt{t/2\pi} $.
Let $ L, M, N $ be non-negative integer satisfying
\[
	N = \left[ \frac{x}{v+w} \right], \ \ 
	L = [vy], \ \ M = [wy]
\]
and let $ \eta = 2\pi y $.
We deform the contour integral path $ C $ to the straight lines 
$ C_1, C_2, C_3 $ and $ C_4 $ joining 
$ \infty, c\eta + i \eta (1+c), -c \eta + i(1-c) \eta, -c \eta - (2L+1)\pi i, \infty $
where $ c $ is an absolute constant $ 0 < c \leq 1/2 $.

\begin{center}
{\unitlength 0.1in%
\begin{picture}( 50.8200, 36.8000)(  4.7000,-41.1000)%
%
\special{pn 13}%
\special{pa 5552 3762}%
\special{pa 1628 3762}%
\special{fp}%
\special{pa 1628 3762}%
\special{pa 1628 2070}%
\special{fp}%
\special{pa 1628 2070}%
\special{pa 2743 965}%
\special{fp}%
\special{pa 2743 965}%
\special{pa 5543 965}%
\special{fp}%
\put(16.0000,-20.6100){\makebox(0,0)[rb]{$ -c \eta + i(1-c)\eta $}}%
\put(21.3400,-15.1300){\makebox(0,0)[rb]{$ i\eta $}}%
\put(27.6100,-9.0000){\makebox(0,0)[lb]{$ c\eta + i(1+c)\eta $}}%
\put(57.9600,-26.4700){\makebox(0,0){$ \rm{Re} $}}%
\put(15.9100,-38.0800){\makebox(0,0)[rt]{$ -c\eta -i(2L+1) $}}%
\put(21.9000,-4.9500){\makebox(0,0){$ \rm{Im} $}}%
\put(20.8700,-27.4900){\makebox(0,0){O}}%
\put(14.6900,-27.5800){\makebox(0,0){$ -c\eta $}}%
%
\special{pn 13}%
\special{pa 4542 965}%
\special{pa 4074 965}%
\special{fp}%
\special{sh 1}%
\special{pa 4074 965}%
\special{pa 4141 985}%
\special{pa 4127 965}%
\special{pa 4141 945}%
\special{pa 4074 965}%
\special{fp}%
%
\special{pn 13}%
\special{pa 2668 1039}%
\special{pa 2466 1239}%
\special{fp}%
\special{sh 1}%
\special{pa 2466 1239}%
\special{pa 2527 1206}%
\special{pa 2504 1201}%
\special{pa 2499 1178}%
\special{pa 2466 1239}%
\special{fp}%
%
\special{pn 13}%
\special{pa 2024 1678}%
\special{pa 1863 1837}%
\special{fp}%
\special{sh 1}%
\special{pa 1863 1837}%
\special{pa 1924 1804}%
\special{pa 1901 1800}%
\special{pa 1896 1776}%
\special{pa 1863 1837}%
\special{fp}%
%
\special{pn 13}%
\special{pa 3239 3762}%
\special{pa 3661 3762}%
\special{fp}%
\special{sh 1}%
\special{pa 3661 3762}%
\special{pa 3594 3742}%
\special{pa 3608 3762}%
\special{pa 3594 3782}%
\special{pa 3661 3762}%
\special{fp}%
%
\special{pn 13}%
\special{pa 1628 2907}%
\special{pa 1628 3232}%
\special{fp}%
\special{sh 1}%
\special{pa 1628 3232}%
\special{pa 1648 3165}%
\special{pa 1628 3179}%
\special{pa 1608 3165}%
\special{pa 1628 3232}%
\special{fp}%
%
\special{pn 8}%
\special{pa 1038 2633}%
\special{pa 5552 2633}%
\special{fp}%
\special{pa 5552 2633}%
\special{pa 5552 2633}%
\special{fp}%
%
\special{pn 8}%
\special{pa 2190 570}%
\special{pa 2190 4110}%
\special{fp}%
\end{picture}}%

\end{center}

Next we consider the residue of the integrand of (\ref{contour_of_Barnes})
\[
	F(z) = \frac{z^{s-1}e^{-(\alpha+vN+wN)z}}{(e^{vz}-1)(e^{wz}-1)}.
\]
\begin{enumerate}[(i)]
	\item
	In the case when $ v, w $ are linear independent over $ \Q $, 
	$ F(z) $ has simple poles at	
	\[
		z = \frac{2\pi in}{v},\ \frac{2\pi in}{w} \ (n = \pm1,\ \pm2,\ \cdots).
	\]
	Since
	\begin{eqnarray*}
		\lim_{z \rightarrow 2\pi in/v}\left(z - \frac{2\pi in}{v}\right) F(z)
		&=& \lim_{z \rightarrow 2\pi in/v}\left(z - \frac{2\pi in}{v}\right)	
						\frac{z^{s-1}e^{-(\alpha + vN + wN)z}}{(e^{vz}-1)(e^{wz}-1)}	\\
		&=& \lim_{z \rightarrow 2\pi in/v} \left( \frac{e^{vz}-e^{2\pi in}}{z-2\pi in/v} \right)^{-1}
			\frac{z^{s-1} e^{-(\alpha + vN+wN)z}}{e^{wz}-1}	\\
		&=& \frac{1}{v} \left(\frac{2\pi in}{v} \right)^{s-1}
			\frac{e^{-(\alpha +wN)2\pi in/v}}{e^{2\pi inw/v}-1},
	\end{eqnarray*}
	then we have
	\begin{eqnarray*}
		\mathop{\mathrm{Res}}\limits_{z = 2\pi in/v} F(z) 
		&=& \frac{1}{v} \left(\frac{2\pi in}{v} \right)^{s-1}
			\frac{e^{-(\alpha +wN)2\pi in/v}}{e^{2\pi inw/v}-1}		\\
		&=&  \begin{cases}
			v^{-s} e^{-2\pi in \alpha /v} (2\pi n)^{s-1}e^{\pi i(s-1)/2}  & (n>0) \\
			v^{-s} e^{2\pi in \alpha /v} (-2\pi n)^{s-1}e^{3\pi i(s-1)/2} & (n<0)
		\end{cases}
	\end{eqnarray*}
	and we obtain
	\begin{eqnarray}
		&& \zeta_2 (s, \alpha ; v,w)	\nonumber \\
		&& = \mathop{\sum \sum} \limits_{0 \leq m,n \leq N} \frac{1}{(\alpha + vm +wn)^s}
		 + \frac{1}{w^s} \sum_{m=0}^N \zeta_H^*(s, \alpha_{v,m})
		 + \frac{1}{v^s} \sum_{n=0}^N \zeta_H^*(s, \alpha_{w,n})		\nonumber \\
		&& \ \ - \frac{\Gamma(1-s)}{(2\pi i)^{1-s}e^{\pi is}}
			\left\{ \frac{1}{v^s} \sum_{0<|n|\leq L} 
				\frac{e^{2\pi in(\alpha+wN)/v}}{(e^{2\pi inw/v}-1)n^{1-s}} 
				+ \frac{1}{w^s} \sum_{0<|n|\leq M} \frac{e^{-2\pi in(\alpha+vN)/w}}{(e^{2\pi inv/w}-1)n^{1-s}} \right\}
			\nonumber \\
		&& \ \ + \frac{1}{\Gamma(s)(e^{2\pi is}-1)} \left( \int_{C_1} + \int_{C_2} + \int_{C_3} + \int_{C_4} \right)
			\frac{z^{s-1}e^{-(\alpha + vN + wN)z}}{(e^{vz} -1)(e^{wz}-1)} dz. 	\label{zeta_2_int}
	\end{eqnarray}
	From here, we consider the order of the integral term on the right-hand side of (\ref{zeta_2_int}).
	First, we consider it on the integral path $ C_4 $. Let $ z = u +iu' = re^{i \theta } $ then 
	$ |z^{s-1}| = r^{\sigma -1} e^{-t \theta} $. Since
	$ \theta \geq (5/4)\pi, r \asymp u + c \eta, |e^{vz}-1|\gg 1 $ and $ |e^{wz}-1|\gg 1 $ we have
	\begin{eqnarray}
		\int_{C_4} F(z) dz
		&=& \int_{C_4} \frac{z^{s-1}e^{-(\alpha+vN+wN)z}}{(e^{vz}-1)(e^{wz}-1)}dz	\nonumber	\\
		&\ll& e^{-(5/4)\pi t} \int_{-c\eta}^\infty (u+c\eta)^{\sigma -1} e^{-(\alpha + vN + wN)u} du	\nonumber	\\
                &\ll& e^{(\alpha + vN + wN)c \eta -(5/4)\pi t} 
                	\int_0^\infty u^{\sigma -1} e^{-(\alpha + vN + wN)u} du		\nonumber	\\
		&\ll& e^{(\alpha + vN + wN)c \eta -(5/4)\pi t} (\alpha + vN + wN)^{-\sigma} \Gamma(\sigma)	\nonumber	\\
		&\ll& x^{-\sigma} e^{c \alpha \pi y + (c-(5/4)\pi)t} \label{int C_4}
	\end{eqnarray}
	Secondly, we consider the order of the integral on $ C_3 $ of (\ref{zeta_2_int}).
	Noticing
	\[
		\arctan{\varphi} = \int_0^\varphi \frac{d\mu}{1+\mu^2} > \int_0^\varphi \frac{d\mu}{(1+\mu)^2}
						= \frac{\varphi}{1+\varphi},
	\]
	at $ \varphi > 0 $, we have
	\[
		\theta = \arg{z} = \frac{\pi}{2} + \arctan{\frac{c}{1-c}} = \frac{\pi}{2} + c
	\]
	on $ C_3 $. Then we have
	\begin{eqnarray*}
		|z^{s-1}e^{-(\alpha + vN +wN)z}| 
		& \ll & \eta^{\sigma-1} e^{-(\pi/2 + c)t} e^{(\alpha + vN +wN)c\eta}	\\
		& \ll & \eta^{\sigma-1} e^{- \pi t/2}.
	\end{eqnarray*}
	Also, since $ | e^{vz} - 1| \gg 1 $, $ |e^{wz}-1| \gg 1 $ we have
	\begin{eqnarray}
		\int_{C_3} F(z) dz
		&=& \int_{C_3} \frac{z^{s-1}e^{-(\alpha+vN+wN)z}}{(e^{vz}-1)(e^{wz}-1)}dz	\nonumber \\
		& \ll & \int_{-(2L+1)}^{(1-c)\eta} \eta^{\sigma-1} e^{-\pi t/2} du'
		\ll \eta^{\sigma} e^{-\pi t/2}		\label{int C_3}				
	\end{eqnarray}
	Thirdly, since $ |e^{vz}-1| \gg e^{vu} $ and $ |e^{wz}-1| \gg e^{wu} $ on $ C_1 $, we have
	\begin{eqnarray*}
	&& \frac{z^{s-1}e^{-(\alpha+ vN + wN)z}}{(e^{vz}-1)(e^{wz}-1)} 	\\
	&& \qquad \qquad 
		\ll \eta^{\sigma -1} \exp{\left(-t\arctan{\frac{(1+c)\eta}{u}} - (\alpha + (N+1)(v+w))u \right)}.
	\end{eqnarray*}
	Since $ N + 1 \geq x/(v+w) = t/(v+w)\eta $ are included in the fractional part of the 
	right hand-side of the above $ -(\alpha + (N+1)(v+w))u $ may be replaced with $ tu/\eta $.
	Also, since
	\[
		\frac{d}{du} \left( \arctan{\frac{(1+c)\eta}{u}} + \frac{u}{\eta} \right)
		= - \frac{(1+c)\eta}{u^2 + (1+c)^2 \eta^2} + \frac{1}{\eta} > 0
	\]
	and
	\[
		\arctan{\varphi} = \int_0^\varphi \frac{d\mu}{1+\mu^2} < \int_0^\varphi d\mu = \varphi,
	\]
	we have
	\begin{eqnarray*}
		\arctan{\frac{(1+c)\eta}{u}} + \frac{u}{\eta}
		& \geq & \arctan{\frac{1+c}{c}} + c 
		 =  \frac{\pi}{2} - \arctan{\frac{c}{1+c}} + c	\\
		& > & \frac{\pi}{2} + A(c)
	\end{eqnarray*}
	in $ c\eta \leq u \leq \pi \eta $, and let $ A(c) = c^2/(1+c)^2 $. Then we have
	\[
		\frac{z^{s-1}e^{-(\alpha +vN+wN)z}}{(e^{vz}-1)(e^{wz}-1)}
		\ll \eta^{\sigma -1} \exp{\left( - \left( \frac{\pi}{2} + A(c) \right)t \right)}.
	\]
	Also, since
	\[
		\frac{z^{s-1}e^{-(\alpha +vN+wN)z}}{(e^{vz}-1)(e^{wz}-1)}
		\ll \eta^{\sigma -1} \exp{(-(\alpha + vx + wx)u)}
	\]
	in $ u \geq \pi \eta $, then we obtain
	\begin{eqnarray}
		&& \int_{C_1} \frac{z^{s-1}e^{-(\alpha +vN+wN)z}}{(e^{vz}-1)(e^{wz}-1)} dz			\nonumber \\
		&& \qquad \ll  \eta^{\sigma -1} \left\{ \int_{c\eta}^{\pi \eta} e^{- ( \pi/2 + A(c))t} du 
			+ \int_{\pi \eta}^\infty e^{-(\alpha + vx + wx)u} du \right\}					\nonumber \\
		&& \qquad \ll  \eta^\sigma e^{- ( \pi/2 + A(c))t} 
			+ \eta^{\sigma-1} e^{-(\alpha + vx + wx)\pi \eta }								\nonumber \\
		&& \qquad \ll \eta^\sigma e^{- ( \pi/2 + A(c))t}	\label{int C_1}.
	\end{eqnarray}
	Finally, we describe the integral evaluation on $ C_2 $. 
	Since, it can be rewritten that $ z = i\eta + \xi e^{\pi i/4} $
	(where $ \xi \in \R $ and $ |\xi| \leq \sqrt{2}c\eta $ ), we have 
	\begin{eqnarray*}
		z^{s-1} &=& \exp\left\{ (s-1) \left( \frac{\pi i}{2} 
				+ \log{(\eta + \xi e^{-\pi i/4})} \right) \right\}	\\
                	&=& \exp\left\{ (s-1) \left( \frac{\pi i}{2} 
				+ \log{\eta} + \log{\left(1 + \frac{\xi}{\eta} e^{-\pi i/4} \right)} \right) \right\}	\\
			&=& \exp\left\{ (s-1) \left( \frac{\pi i}{2} 
				+ \log{\eta} + \frac{\xi}{\eta}e^{-\pi i/4} 
				- \frac{\xi^2}{2\eta^2}e^{-\pi i/2} + O\left( \frac{\xi^3}{\eta^3}\right) 
				\right) \right\}	\\
			&\ll& \eta^{\sigma-1} 
				\exp\left\{ \left( -\frac{\pi}{2} + \frac{\xi}{\sqrt{2}\eta} 
						- \frac{\xi^2}{2\eta^2} + O\left( \frac{\xi^3}{\eta^3}\right) 
				\right)t \right\} 	\quad (\xi \rightarrow \infty).
	\end{eqnarray*}
	as $ \eta \rightarrow \infty $. Also, since
	\begin{eqnarray*}
		\frac{e^{-(\alpha + vN + wN)z}}{(e^{vz} - 1)(e^{wz} - 1)}
		&=& \frac{e^{-(\alpha + vN + wN)z + (\alpha + vx + wx)z}}{(e^{vz} - 1)(e^{wz} - 1)} \cdot 
			e^{-(\alpha + vx + wx)z}	\\
		&=& \frac{e^{(v+w)(x-N)z}}{(e^{vz} - 1)(e^{wz} - 1)} \cdot e^{-(\alpha + vx + wx)z}
	\end{eqnarray*}
	and
	\[
		\frac{e^{(v+w)(x-N)z}}{(e^{vz} - 1)(e^{wz} - 1)} \ll 
		\begin{cases}
			e^{(v+w)(x-N-1)u} & \left( u > \dfrac{\pi}{2} \right) \\
			e^{(v+w)(x-N)u} & \left( u < -\dfrac{\pi}{2} \right),
		\end{cases}
	\]	
	we have
	\[
		\frac{e^{-(\alpha + vN + wN)z}}{(e^{vz} - 1)(e^{wz} - 1)}
		\ll |e^{-(\alpha + vx + wx) \xi/ \sqrt{2}}| 	\quad \left( |u| > \frac{\pi}{2} \right).
	\]
	Hence
	\begin{eqnarray}
		&& \int_{C_2 \cap \{z \,|\, |u|>\pi/2\}} \frac{z^{s-1}e^{-(\alpha + vN + wN)z}}
			{(e^{vz} - 1)(e^{wz} - 1)} dz	 \nonumber\\
		&& \ll \eta^{\sigma -1} e^{-\pi t/2} \int_{-\sqrt{2}c\eta}^{\sqrt{2}c\eta} 	
			 \exp\left\{ \left( \frac{\xi}{\sqrt{2} \eta}(1-v-w) 
						- \frac{\xi^2}{2 \eta^2} + O\left( \frac{\xi^3}{\eta^3}\right) 
				\right)t \right\} d\xi		\nonumber\\
		&& \ll \eta^{\sigma -1} e^{- \pi t/2} \int_{-\infty}^\infty 
			\exp\left\{ - \frac{B(c)\xi^2 t}{\eta^2} \right\} d\xi	\nonumber\\
		&& \ll \eta^\sigma t^{-1/2} e^{-\pi t/2}, \label{evaluate C_2_1}
	\end{eqnarray}
	where $ B(c) $ is a constant depending on $ c $.
	The argument can also be applied to the part $ |u| \leq \pi/2 $ if $ |e^{z-2\pi i \lambda}| > A $. 
	If not, that is the case when the contour goes too near to the pole at $ z = 2L\pi i/v $ (or $ 2M\pi i/w $),
	we take an arc of the circle $ |z - 2L\pi i/v| = \varepsilon $ (or $ |z - 2M\pi i/w| = \varepsilon $) .
	On this arc we can write
	\[
		z = \frac{2L\pi i}{v} + \varepsilon e^{i \beta} \quad \mathrm{or} \quad 
		z = \frac{2M\pi i}{w} + \varepsilon e^{i \beta},
	\]
	where $ \varepsilon $ is a positive real number less than the distance between any two poles, that is,
	\[
		0 < \varepsilon < \min_{k, l} \left\{ \left| \frac{2k\pi i}{v} - \frac{2l\pi i}{w} \right| 
					\, \left| \ 0 < \frac{2k\pi}{v}, \frac{2l\pi}{w}  < \eta \right. \right\}.
	\]
	Then,
	\begin{eqnarray*}
		\log{(z^{s-1})} 
		&=& (s-1) \log{\left( \frac{2L\pi i}{v}  + \varepsilon e^{i \beta} \right)}	\\
		&=& (s-1) \log{e^{\pi i/2} \left( \frac{2L\pi}{v} + \frac{\varepsilon e^{i \beta}}{i} \right)}	\\
		&=& (\sigma + it - 1) \left\{ \frac{\pi i}{2} + \log{\frac{2L\pi}{v}} 
			+ \log\left( 1 + \frac{v \varepsilon e^{i \beta}}{2L\pi i} \right) \right\}	\\
		&=& -\frac{\pi t}{2} + (s-1)\log{\frac{2L\pi}{v}} + \frac{v \varepsilon e^{i \beta}}{2L\pi}t + O(1).
	\end{eqnarray*}
	On the last line of the above calculations, we used $ N^2 \gg t $ which follows from the assumption $ x \leq y $.
	Then
	\begin{eqnarray*}
		&& z^{s-1} e^{-(\alpha + vN + wN)z}	\\
		&& = \exp{\left( -\frac{\pi t}{2} + (s-1)\log{\frac{2L\pi}{v}} + \frac{v \varepsilon e^{i \beta}}{2L\pi } t+ O(1) \right)}	\\
		&& 	\qquad \qquad \qquad \qquad 
			\times \exp{\left( -(\alpha + vN +wN) \left( \frac{2L\pi i}{v} + \varepsilon e^{i \beta} \right) \right)}		\\
		&& = \exp{\left( -\frac{\pi t}{2} + (s-1)\log{\frac{2L\pi}{v}} 
			+ \left( \frac{vt}{2\pi L} - (\alpha + vN + wN) \right) \varepsilon e^{i \beta} + O(1) \right)},
	\end{eqnarray*}
	and since
	\begin{eqnarray*}
		&& \left( \frac{vt}{2\pi L} - (\alpha + vN + wN) \right) \varepsilon e^{i \beta}	\\
		&& \qquad = \frac{vt - (\alpha + vN + wN)2\pi L}{2\pi L} \varepsilon e^{i \beta}		\\
		&& \qquad = \frac{2\pi xyv - 2\alpha \pi L - (v+w)[x/(v+w)]2\pi L}{2\pi L} \varepsilon e^{i \beta}	\\
		&& \qquad = - \alpha \varepsilon e^{i \beta} + \frac{2\pi xyv - (v+w)[x/(v+w)]2\pi L}{2\pi L} \varepsilon e^{i \beta}	\\
		&& \qquad \asymp 
                	 \left\{ x - (v+w) \left[ \frac{x}{v+w} \right] - \alpha \right\}
                	 \varepsilon e^{i \beta}
			= O(1)
	\end{eqnarray*} 
	we have
	\begin{eqnarray*}
		z^{s-1} e^{-(\alpha + vN + wN)z}	
		&=& \exp{\left( -\frac{\pi t}{2} + (s-1)\log{\frac{2L\pi}{v}} + O(1) \right)}	\\
		&\ll & \left( \frac{L}{v} \right)^{\sigma - 1} e^{-\pi t/2} = O(\eta^{\sigma - 1}e^{-\pi t/2})
	\end{eqnarray*}
	In the case when the path $ C_2 $ is running around the another poles at $ z = 2k \pi i/w + \varepsilon e^{i \beta} $, 
        by using the similar above method to obtain
	\[
		z^{s-1} e^{-(\alpha + vN + wN)z} = O(\eta^{\sigma - 1}e^{-\pi t/2}).
	\]
	Therefore together with (\ref{evaluate C_2_1}), we have
	\begin{equation}
		 \int_{C_2} \frac{z^{s-1}e^{-(\alpha + vN + wN)z}}{(e^{vz} - 1)(e^{wz} - 1)} dz
		\ll \eta^\sigma t^{-1/2} e^{-\pi t/2} + \eta^{\sigma - 1}e^{-\pi t/2} \label{int C_2}
	\end{equation}
	Since, the evaluation of all integrals was obtained, using the evaluation formulas
	(\ref{int C_4}),(\ref{int C_3}),(\ref{int C_1}),(\ref{int C_2}) and 
	$ \Gamma(1-s) \ll t^{1/2 - \sigma} e^{\pi t/2} $, we find that the evaluation of the integral term of 
	(\ref{zeta_2_int}) is
	\begin{eqnarray*}
		&\ll& t^{1/2 - \sigma} e^{\pi t/2} 
			\{ \eta^\sigma e^{-(\pi/2 + A(c))t}
			+ \eta^\sigma t^{-1/2} e^{-\pi t/2} + \eta^{\sigma-1}e^{-\pi t/2} 	\\	
		&& \qquad \qquad \qquad \qquad \qquad \qquad \qquad \qquad
			+ \eta^\sigma e^{-\pi t/2 } + x^{-\sigma} e^{(c - 5\pi/4)t} \}  \\
		&\ll& t^{1/2} \left( \frac{\eta}{t}\right)^\sigma e^{-A(c)t} 
			+ \left(\frac{\eta}{t}\right)^\sigma + t^{-1/2} \left(\frac{\eta}{t}\right)^{\sigma-1}
			+ t^{1/2 - \sigma} x^{-\sigma} e^{(c - 5\pi/4)t}	\\
		&\ll& e^{-\delta t} + x^{-\sigma} + t^{-1/2} x^{1-\sigma}  \ll  x^{-\sigma},
	\end{eqnarray*}
	where $ \delta $ is a small positive real number.
	Therefore we obtain (\ref{Thm1(i)}).
	
	\item
	In the case when $ v, w $ are linear dependent over $ \Q $, that is, there exist $ p, q \in \N $
	such that $ pv = qw $ and $ (p, q) = 1 $. $ F(z) $ has simple poles at	
	\[
		z = \frac{2\pi in}{v} \ \, (n \in \Z \setminus \{ 0 \},\ q \mid \hspace{-.65em}/  n) ,\ \
			\frac{2\pi in}{w} \ \, (n \in \Z \setminus \{ 0 \},\ p \mid \hspace{-.65em}/  n) .
	\]	
	On the other hand here for $ w = pv/q $,
        \begin{eqnarray*}
		&& \lim_{z \rightarrow 2q\pi in/v} \frac{d}{dz} 
				\left\{ \left( z -\frac{2q\pi in}{v} \right)^2 
						\frac{z^{s-1}e^{-(\alpha + vN+pvN/q)z}}{(e^{vz}-1)(e^{pvz/q}-1)} \right\}	\\
		&& = \lim_{z_0 \rightarrow 0} \frac{d}{dz_0} 
			\left\{ z_0^2
					\left(z_0 + \frac{2q\pi in}{v}\right)^{s-1} 
							\frac{e^{-(\alpha +vN + pvN/q)z_0}e^{-2q\pi in\alpha/v}}
					{(e^{vz_0}-1)(e^{pvz_0/q}-1)}	\right\}	\\
		&& = \lim_{z_0 \rightarrow 0} \frac{d}{dz_0} 
			\left\{ \vbox to 30pt{}  z_0^2
					\left(z_0 + \frac{2q\pi in}{v}\right)^{s-1}		\right.	\\
		&&  \ \  \qquad \qquad \qquad \qquad \left.  \times \frac{e^{-(\alpha +vN + pvN/q)z_0}e^{-2q\pi in\alpha/v}}
					{\left( vz_0 + \frac{v^2}{2!}z_0^2 + O(z_0^3) \right)
					 \left( \frac{pv}{q}z_0 + \frac{1}{2!}\left(\frac{pv}{q}\right)^2 z_0^2 + O(z_0^3) \right)}
					\right\}	\\
		&& = \frac{q}{pv^2} (s-1) \left( \frac{2q\pi in}v \right)^{s-2} e^{-2q\pi in\alpha /v} \nonumber \\
        && \ \  \qquad \quad  - \left( \frac{\alpha q}{pv^2} + \frac{(p+q)N}{pv} + \frac{vp}{2q} + \frac{v}2 \right) 
                	\left( \frac{2q\pi in}v \right)^{s-1} e^{-2q\pi in\alpha /v}
	\end{eqnarray*}
        Therefore $ F(z) $ has double poles at
	\[
		z = \frac{2\pi i qn}{v} = \frac{2\pi ipn}{w} \quad (n \in \Z \setminus \{ 0 \}).
	\]
	Then, we calculate the following residue sum;
	\begin{eqnarray*}
		&& \sum_{0 < |n| \leq M} \mathrm{Res} F(z)
			= \mathop{\sum_{0 < |n| \leq L}}\limits_{q \; \mid \hspace{-.25em}/ \, n}
				\mathop{\mathrm{Res}}\limits_{z = 2\pi i n/v} F(z)
			+ \mathop{\sum_{0 < |n| \leq K}}\limits_{p \; \mid \hspace{-.25em}/ \, n}
				\mathop{\mathrm{Res}}\limits_{z = 2\pi i n/w} F(z)	\\
		&& \qquad \qquad \qquad \qquad 
			+ \sum_{0 < |k| \leq K} \mathop{\mathrm{Res}}\limits_{z = 2\pi i qk/v} F(z)
	\end{eqnarray*}
	
	therefore, we have
	\begin{eqnarray*}
		\mathop{\mathrm{Res}}\limits_{z = 2\pi in/v} F(z) 
		&=& \frac{1}{v} \left(\frac{2\pi in}{v} \right)^{s-1}
			\frac{e^{-(\alpha +wN)2\pi in/v}}{e^{2\pi inw/v}-1}		\\
		&=&  \begin{cases}
			e^{-2\pi in \alpha} (2\pi n)^{s-1}e^{\pi(s-1)/2}  & (n>0) \\
			e^{2\pi in \alpha} (-2\pi n)^{s-1}e^{3\pi(s-1)/2} & (n<0)
		\end{cases}
	\end{eqnarray*}
	and we obtain
	\begin{eqnarray*}
		&& \zeta_2(s, \alpha; v,w)		\nonumber \\
			&&  = \mathop{\sum\sum}\limits_{0 \leq m,n \leq N} \frac{1}{(\alpha + vm + wn)^s}
				+ \frac{1}{w^s}\sum_{m=0}^N \zeta_H^*(s,\alpha_{v,m}) 
						+ \frac{1}{v^s}\sum_{n=0}^N \zeta_H^*(s, \alpha_{w,n})		\nonumber \\
			&& \ \ - \frac{\Gamma(1-s)}{(2\pi i)^{1-s}e^{\pi is}}\left\{ \frac{1}{v^s} 
					\mathop{\sum_{0 < |n| < L}}\limits_{q \; \mid \hspace{-.25em}/ \, n} 
							\frac{e^{-2\pi in(\alpha+wN)/v}}
								{(e^{2\pi inw/v}-1)n^{1-s}} + \frac{1}{w^s}
					\mathop{\sum_{0 < |n| < M}}\limits_{p \; \mid \hspace{-.25em}/ \, n} 
							\frac{e^{-2\pi in(\alpha+vN)/w}}
								{(e^{2\pi inv/w}-1)n^{1-s}} \right. \nonumber \\
		&& \qquad
                	+ \frac{q^{s-1}}{2\pi ipv^s}(1-s) \sum_{0 < |n| < M} \frac{e^{-2q\pi in\alpha/v}}{n^{2-s}} 
                        \nonumber \\
                && \qquad	
                	- \left. \left( \frac{\alpha q}{pv^2} + \frac{(p+q)N}{pv} + \frac{vp}{2q} + \frac{v}2 \right) 
                	\left( \frac{q}{v} \right)^{s-1} \sum_{0 < |n| < M} \frac{e^{-2q\pi in\alpha /v}}{n^{1-s}} 
                        	 	\vbox to 35pt{} \right\} \nonumber \\
		&& \ \	+ \frac{1}{\Gamma(s)(e^{2\pi is}-1)} \left( \int_{C_1} + \int_{C_2} + \int_{C_3} + \int_{C_4} \right)
			\frac{z^{s-1}e^{-(\alpha + vN + wN)z}}{(e^{vz} -1)(e^{wz}-1)} dz. 	
	\end{eqnarray*}
	Furthermore, four integrals in the last term of the above are evaluated to lead the same result by the
	similar method as in (i)
\end{enumerate}
Hence proof of Theorem \ref{th:Main_Theorem1} is complete.
\qed

\bigskip

\section{Some Lemmas}
In this section, we will first introduce two basic lemmas (Lemma \ref{f''} and Lemma \ref{f'''})
on exponential sum in the van der Corput method. These lemmas are called 
the second order differential test and the third order differential test, respectively.
Also, the Hurwitz zeta-function $ \zeta_H(s,\alpha) $ has similar results to the Hardy-Littlewood's results
(\ref{AFE}) and (\ref{Hardy-Littlewood}), which are described in Lemma \ref{AFE_of_zeta_H}
and Lemma \ref{Hardy-Littlewood_zeta_H}.
There are approximate functional equation for the Lerch zeta-function $ \zeta_L(s, \alpha, \lambda) $ proved by
R. Garunk\v{s}tis, A. Laurin\v{c}ikas, and J. Steuding (see \cite{GLS}, \cite{LG0}, \cite{LG}),
Lemma \ref{AFE_of_zeta_H} is an analogue of its special case (see \cite{TM1}) .

\bigskip

\begin{lemma}[Theorem 5.9 in \cite{Tit}]\label{f''}
Let $ a,b $ are real number with $ b \geq a+1 $ and $ c > 1 $ is a constant.
Suppose that $ f(x) $ be a real two times differentiable function which satisfies
\[
	0 < \Lambda \leq f''(x) \leq c \Lambda
        \quad or \quad
        0 < \Lambda \leq -f''(x) \leq c \Lambda
\]
in $ [a,b] $. Then,
\[
	\sum_{a < n \leq b} e^{2\pi if(n)}
        \ll c (b-a) \Lambda^{1/2} + \Lambda^{-1/2}.
\]
\end{lemma}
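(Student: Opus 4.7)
The plan is to derive this bound from the Kuzmin--Landau inequality, which states that if $g$ is continuously differentiable with $g'$ monotonic on $[a,b]$ and $\|g'(x)\| \geq \lambda > 0$ throughout (where $\|\cdot\|$ denotes the distance to the nearest integer), then $\sum_{a < n \leq b} e^{2\pi i g(n)} \ll \lambda^{-1}$. This auxiliary estimate is itself obtainable by partial summation applied after pairing consecutive terms and exploiting that the differences $g(n+1) - g(n)$ stay bounded away from the integers; in the actual write-up one would likely cite it as a standard companion to Titchmarsh's Theorem 5.9.

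Under the hypothesis $0 < \Lambda \leq |f''(x)| \leq c\Lambda$ the derivative $f'$ is strictly monotone on $[a,b]$, and the mean value theorem yields the total variation bound $|f'(b) - f'(a)| \leq c\Lambda(b-a)$. I would then fix the threshold $\lambda := \Lambda^{1/2}$ and partition $[a,b]$ into alternating \emph{good} subintervals, on which $\|f'(x)\| \geq \lambda$, and \emph{bad} subintervals, on which $\|f'(x)\| < \lambda$. Each bad piece corresponds to $f'$ lying within $\lambda$ of some fixed integer $k$, and since $|f''| \geq \Lambda$ such a piece has length at most $2\lambda/\Lambda = 2\Lambda^{-1/2}$; the total number of pieces (good or bad combined) is controlled by the number of integers crossed by $f'$, and is therefore $\ll c\Lambda(b-a) + 1$.

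On each bad piece I would bound the exponential sum trivially by its length plus one, contributing $\ll \Lambda^{-1/2}$ per piece and $\ll (c\Lambda(b-a)+1)\Lambda^{-1/2} \ll c(b-a)\Lambda^{1/2} + \Lambda^{-1/2}$ in total. On each good piece the Kuzmin--Landau inequality applies directly (since $f'$ is monotone and separated from the integers) and yields $\ll \lambda^{-1} = \Lambda^{-1/2}$, summing to the same total. Adding the two contributions produces exactly the claimed estimate $c(b-a)\Lambda^{1/2} + \Lambda^{-1/2}$, and the choice $\lambda = \Lambda^{1/2}$ is precisely the one that balances the good and bad contributions.

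The main obstacle is the Kuzmin--Landau inequality itself; once that tool is in hand, the partitioning argument above is essentially bookkeeping. Since this lemma is explicitly quoted as Theorem 5.9 of \cite{Tit}, the cleanest route for the paper is simply to invoke the reference, but the sketch above indicates how a self-contained proof would proceed.
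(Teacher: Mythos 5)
Your sketch is correct and is essentially the standard proof of the van der Corput second-derivative test, which is exactly how Theorem 5.9 is proved in Titchmarsh (the paper itself offers no proof and simply cites that reference). The only point worth making explicit is that the "$+1$ per piece" from the trivial bound is absorbed into $c(b-a)\Lambda^{1/2}+\Lambda^{-1/2}$ only after disposing of the case $\Lambda\geq 1$ by the trivial estimate $b-a+1\ll c(b-a)\Lambda^{1/2}$; with that remark the bookkeeping closes.
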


\bigskip

\begin{lemma}[Theorem 5.11 in \cite{Tit}]\label{f'''}
Let $ a,b $ are real number with $ b \geq a+1 $, and $ c > 1 $ is a constant.
Suppose that $ f(x) $ be a real three times differentiable function which satisfies
\[
	0 < \Lambda \leq f'''(x) \leq c \Lambda
        \quad or \quad
        0 < \Lambda \leq -f'''(x) \leq c \Lambda
\]
in $ [a,b] $. Then,
\[
	\sum_{a < n \leq b} e^{2\pi if(n)}
        \ll c^{1/2} (b-a) \Lambda^{1/6} + (b-a)^{1/2}\Lambda^{-1/6}.
\]
\end{lemma}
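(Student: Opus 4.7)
The plan is to derive Lemma \ref{f'''} from the already-stated second-derivative test (Lemma \ref{f''}) via the Weyl--van der Corput differencing inequality. Since we only control the third derivative, we cannot apply Lemma \ref{f''} directly to $f$; instead, we introduce a shift parameter $h$ and study the difference function $g_h(x) = f(x+h) - f(x)$. By the mean value theorem, $g_h''(x) = f''(x+h) - f''(x) = h f'''(\xi)$ for some $\xi \in (x, x+h)$, so the hypothesis $0 < \Lambda \leq \pm f'''(x) \leq c\Lambda$ transfers to $0 < h\Lambda \leq \pm g_h''(x) \leq c h \Lambda$ on the relevant subinterval. This makes each $g_h$ amenable to Lemma \ref{f''} with parameter $h\Lambda$ in place of $\Lambda$.

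First I would invoke the Weyl--van der Corput inequality: for any integer $H$ with $1 \leq H \leq b-a$,
\[
\left|\sum_{a<n\leq b} e^{2\pi i f(n)}\right|^2 \ll \frac{(b-a)^2}{H} + \frac{b-a}{H}\sum_{h=1}^{H-1}\left|\sum_{a<n\leq b-h} e^{2\pi i g_h(n)}\right|.
\]
Applying Lemma \ref{f''} to each inner sum yields
\[
\sum_{a<n\leq b-h} e^{2\pi i g_h(n)} \ll c(b-a)(h\Lambda)^{1/2} + (h\Lambda)^{-1/2}.
\]
Summing these bounds over $1 \leq h < H$ contributes $\ll c(b-a)\Lambda^{1/2} H^{3/2} + \Lambda^{-1/2} H^{1/2}$, and substituting back produces
\[
\left|\sum_{a<n\leq b} e^{2\pi i f(n)}\right|^2 \ll \frac{(b-a)^2}{H} + c(b-a)^2 \Lambda^{1/2} H^{1/2} + (b-a)\Lambda^{-1/2} H^{-1/2}.
\]

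Finally I would optimize $H$. Balancing the first and second terms suggests $H \asymp \Lambda^{-1/3}$; substituting this choice and then taking square roots produces exactly the claimed bound $c^{1/2}(b-a)\Lambda^{1/6} + (b-a)^{1/2}\Lambda^{-1/6}$ (the third term contributes the $\Lambda^{-1/6}$ piece, while the first two contribute the $\Lambda^{1/6}$ piece with the advertised dependence on $c$). The main obstacle is the bookkeeping around the allowed range of $H$: the optimal $H \asymp \Lambda^{-1/3}$ must lie in $[1, b-a]$, so one must argue separately in the regimes $\Lambda \leq (b-a)^{-3}$ (where the trivial bound $\sum 1 = b-a$ is already sharper than what is claimed, via the first term of the conclusion) and $\Lambda \geq 1$ (where $f''$ has variation at least $\Lambda(b-a)$ across $[a,b]$, and one splits dyadically into subintervals on which $f''$ is monotone with controlled oscillation before applying Lemma \ref{f''}). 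These boundary checks are routine but must be carried out carefully to produce the stated bound uniformly in $\Lambda$; this is precisely the argument of Titchmarsh \cite{Tit}, §5.13.
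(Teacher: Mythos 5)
The paper does not actually prove this lemma---it is imported verbatim as Theorem 5.11 of Titchmarsh \cite{Tit}, and the proof there is exactly the Weyl--van der Corput differencing argument you give: apply Lemma \ref{f''} to the differenced phases $g_h(x)=f(x+h)-f(x)$ (whose second derivative is $hf'''(\xi)$), sum over $h<H$, and optimize $H\asymp\Lambda^{-1/3}$, with the trivial estimate covering the ranges of $\Lambda$ for which this $H$ falls outside $[1,b-a]$. Your argument is correct; the only slips are cosmetic --- in the regime $\Lambda\le(b-a)^{-3}$ it is the term $(b-a)^{1/2}\Lambda^{-1/6}$ (not the first term of the conclusion) that absorbs the trivial bound $b-a$, and for $\Lambda\ge1$ the trivial bound $b-a\le c^{1/2}(b-a)\Lambda^{1/6}$ already suffices with no further splitting.
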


\bigskip


\begin{lemma}[Theorem 1(ii) in \cite{TM1}]\label{AFE_of_zeta_H}
Let $ 0 < \alpha \leq 1 $. Suppose that $ 0 \leq \sigma \leq 1,\ x \geq 1,\ y\geq 1 $ and $ 2\pi xy = |t| $. Then
\begin{eqnarray}
&& \zeta_H(s, \alpha) = \sum_{0 \leq n \leq x}\frac{1}{(n+\alpha)^s} 	\nonumber \\
&& \qquad \qquad \qquad
	+ \frac{\Gamma(1-s)}{(2\pi)^{1-s}} 
	\left\{ e^{\frac{\pi i}{2}(1-s)} 
	\sum_{1 \leq n \leq y} \frac{e^{2\pi in(1-\alpha)}}{n^{1-s}} + 
	e^{-\frac{\pi i}{2}(1-s)} 
        \sum_{1 \leq n \leq y} \frac{e^{2\pi in \alpha}}{n^{1-s}} \right\} \nonumber \\
&& \qquad \qquad  \quad + O(x^{-\sigma}) + O(|t|^{1/2-\sigma}y^{\sigma-1}). 
	\label{AFE_of_Hurwitz}
\end{eqnarray} 
\end{lemma}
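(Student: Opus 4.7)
The strategy is to mirror the proof of Theorem \ref{th:Main_Theorem1}, specialized to a single summation variable. Since the Hurwitz zeta-function is the one-dimensional analogue of the Barnes double zeta-function, one can adapt the contour-integration and residue-calculation argument essentially verbatim, and the result matches Hardy-Littlewood's (\ref{AFE}) up to the twist by $\alpha$ in the exponentials.

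First, I would split the series at $N = [x]$ and convert the tail into the Hankel-type contour integral
\[
\sum_{n=N+1}^\infty \frac{1}{(n+\alpha)^s}
   = \frac{\Gamma(1-s)}{2\pi i \, e^{\pi i s}} \int_C \frac{z^{s-1} e^{-(N+\alpha)z}}{e^z - 1}\, dz,
\]
where $C$ is the keyhole contour used in the proof of Theorem \ref{th:Main_Theorem1} (from $+\infty$ along the real axis, around $0$, and back to $+\infty$). This is the single-factor analogue of (\ref{contour_of_Barnes}), obtained by applying the $\Gamma(s)$-Mellin representation to each term and summing the resulting geometric series.

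Next, I would deform $C$ into the polygonal path $C_1 \cup C_2 \cup C_3 \cup C_4$ with $\eta = 2\pi y$, following Theorem \ref{th:Main_Theorem1}. The integrand has only simple poles, located at $z = 2\pi i n$ for $n \in \Z \setminus \{0\}$; there is no coincidence of poles, so the situation is strictly simpler than the linearly dependent case of Theorem \ref{th:Main_Theorem1}. A direct computation gives
\[
\mathop{\mathrm{Res}}_{z = 2\pi i n} \frac{z^{s-1} e^{-(N+\alpha)z}}{e^z - 1}
   = (2\pi i n)^{s-1}\, e^{-2\pi i n \alpha}
\]
(using $e^{-2\pi i n N} = 1$). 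Separating $n > 0$ and $n < 0$, and writing $(\pm 2\pi i |n|)^{s-1} = (2\pi|n|)^{s-1} e^{\pm \pi i (s-1)/2}$, then multiplying by $\Gamma(1-s)/(2\pi i \, e^{\pi i s})$, the sum of residues for $1 \leq |n| \leq [y]$ collapses exactly to the two trigonometric sums in (\ref{AFE_of_Hurwitz}), after using $e^{-2\pi i n \alpha} = e^{2\pi i n(1-\alpha)}$ on the $n > 0$ branch.

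Finally, I would bound the four contour integrals using the same estimates as in Theorem \ref{th:Main_Theorem1}. The integrals on $C_1$, $C_3$, and $C_4$ are exponentially small after combining with the Stirling bound $|\Gamma(1-s)| \ll t^{1/2 - \sigma} e^{\pi t/2}$, which together contribute $O(x^{-\sigma})$. The delicate piece is $C_2$: parametrizing $z = i\eta + \xi e^{i\pi/4}$ and expanding $z^{s-1} e^{-(N+\alpha)z}$ to second order in $\xi/\eta$ produces a Gaussian saddle-point integral of size $\eta^\sigma t^{-1/2} e^{-\pi t/2}$, which after combination with $\Gamma(1-s)$ yields the $O(|t|^{1/2 - \sigma} y^{\sigma - 1})$ error. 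The principal obstacle is the local analysis on $C_2$ near the poles $z = 2\pi i n$ with $n$ close to $y$: following Theorem \ref{th:Main_Theorem1}, one excises small arcs of radius $\varepsilon$ about each such pole and shows that each detour contributes at most $O(\eta^{\sigma - 1} e^{-\pi t/2})$, which is again absorbed into the stated error after multiplying by $\Gamma(1-s)$.
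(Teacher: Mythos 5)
This lemma is not proved in the paper at all: it is imported verbatim from the author's earlier article \cite{TM1} (``Theorem 1(ii)''), so there is no in-paper argument to compare against. Your proposal — specialize the contour/residue machinery of Theorem \ref{th:Main_Theorem1} to one summation variable — is exactly the natural route, and it is essentially the method of \cite{TM1} on which the proof of Theorem \ref{th:Main_Theorem1} in this paper is itself modelled. The residue bookkeeping you assert does check out: with the paper's branch convention the poles at $z=2\pi i n$ carry $\arg z=\pi/2$ for $n>0$ and $\arg z=3\pi/2$ for $n<0$, and after multiplying by $\Gamma(1-s)/(2\pi i\,e^{\pi i s})$ and the $2\pi i$ from the residue theorem one gets precisely the factors $e^{\pm\pi i(1-s)/2}/(2\pi)^{1-s}$ in \eqref{AFE_of_Hurwitz}, with $e^{-2\pi in\alpha}=e^{2\pi in(1-\alpha)}$ on the positive branch. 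One small inaccuracy: the $C_3$ integral is \emph{not} exponentially small after multiplying by $\Gamma(1-s)\ll t^{1/2-\sigma}e^{\pi t/2}$; it contributes $(\eta/t)^{\sigma}\asymp x^{-\sigma}$ and is in fact one of the sources of the $O(x^{-\sigma})$ term, so it should not be lumped with $C_1$ and $C_4$.

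The one genuine gap is the range of $(x,y)$. Your argument (like the proof of Theorem \ref{th:Main_Theorem1}) places the saddle of $z^{s-1}e^{-(N+\alpha)z}$ at height $\asymp t/N\asymp 2\pi y$ and, in the local analysis where $C_2$ passes near a pole $z=2\pi iL$, needs the expansion error $t\cdot O(\varepsilon^2/L^2)=O(1)$, i.e.\ $y^2\gg t$, which is exactly the hypothesis $y\geq x$. The lemma as stated only assumes $x\geq1$, $y\geq1$, $2\pi xy=|t|$, so the case $x>y$ is not covered by your sketch; classically (as in Titchmarsh's treatment of \eqref{AFE}) that case is obtained by applying the already-proved case to the ``dual'' object and invoking the functional equation, which for $\zeta_H$ is Hurwitz's formula relating $\zeta_H(1-s,\alpha)$ to Lerch zeta-functions — presumably why \cite{TM1} treats the Hurwitz and Lerch cases together. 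For the purposes of this paper the gap is harmless, since Lemma \ref{AFE_of_zeta_H} is only ever invoked with $y\gg x$ (e.g.\ $x=C$, $y=|t|/2\pi C$ in the proof of Theorem \ref{th:Main_Theorem2}), but as a proof of the lemma as stated you would need to add that reduction.
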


\bigskip

\begin{lemma}\label{Hardy-Littlewood_zeta_H}
Let $ x>0, t>0 $ and $ 0 < \alpha \leq 1 $. Suppose that $ x \ll t $, then
\[
	\sum_{0 \leq n \leq x} \frac1{(n+\alpha)^{1/2 + it}} \ll t^{1/6} \log{x}
        \qquad \quad (t \rightarrow \infty).
\]
\end{lemma}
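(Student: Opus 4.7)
\textbf{Proof of Lemma \ref{Hardy-Littlewood_zeta_H} (proposal).}

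The plan is to imitate the classical Hardy--Littlewood proof of $\zeta(1/2+it)\ll t^{1/6+\varepsilon}$: split the sum dyadically, strip the weight $(n+\alpha)^{-1/2}$ by Abel summation, and estimate the resulting pure exponential sums via Lemmas \ref{f''} and \ref{f'''}. Setting $N_\nu = 2^\nu$ for $\nu=0,1,\ldots,\lfloor\log_2 x\rfloor$, I would decompose
\[
\sum_{0\le n\le x}\frac{1}{(n+\alpha)^{1/2+it}}=O(1)+\sum_{\nu}\sum_{N_\nu<n\le 2N_\nu}\frac{e^{2\pi i f(n)}}{(n+\alpha)^{1/2}},
\]
where $f(y):=-(t/2\pi)\log(y+\alpha)$ so that $(n+\alpha)^{-it}=e^{2\pi i f(n)}$, and the $O(1)$ absorbs the finitely many initial terms with $n+\alpha\le 2$. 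Writing $S_\nu(M):=\sum_{N_\nu<n\le M}e^{2\pi i f(n)}$, Abel summation gives
\[
\Bigl|\sum_{N_\nu<n\le 2N_\nu}\frac{e^{2\pi i f(n)}}{(n+\alpha)^{1/2}}\Bigr|\ll N_\nu^{-1/2}\max_{N_\nu\le M\le 2N_\nu}|S_\nu(M)|.
\]

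On $[N_\nu,2N_\nu]$ one computes $f''(y)\asymp t/N_\nu^2$ and $|f'''(y)|\asymp t/N_\nu^3$, each of constant sign, so the hypotheses of the derivative tests are satisfied with $\Lambda=t/N_\nu^2$ or $\Lambda=t/N_\nu^3$ respectively. For $N_\nu\le t^{2/3}$, Lemma \ref{f'''} gives $|S_\nu(M)|\ll N_\nu^{1/2}t^{1/6}+N_\nu t^{-1/6}$, so that $N_\nu^{-1/2}|S_\nu(M)|\ll t^{1/6}+N_\nu^{1/2}t^{-1/6}\ll t^{1/6}$. For $t^{2/3}<N_\nu\ll t$, Lemma \ref{f''} gives $|S_\nu(M)|\ll t^{1/2}+N_\nu t^{-1/2}$, hence $N_\nu^{-1/2}|S_\nu(M)|\ll (t/N_\nu)^{1/2}+(N_\nu/t)^{1/2}\ll t^{1/6}+1\ll t^{1/6}$. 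In both ranges each dyadic block contributes $O(t^{1/6})$.

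Summing over the $O(\log x)$ dyadic blocks then yields the asserted bound $\ll t^{1/6}\log x$. The only real subtlety is the crossover at $N_\nu=t^{2/3}$, which is precisely the balance point where neither derivative test alone succeeds; and the hypothesis $x\ll t$ is used exactly to forbid blocks with $N_\nu\gg t$, where $\Lambda=t/N_\nu^2<1$ and the second derivative test becomes vacuous. Beyond this bookkeeping, the argument is a direct adaptation of the standard exponential sum estimate for $\zeta(1/2+it)$ to the Hurwitz setting, the shift by $\alpha$ being irrelevant at the level of derivatives of $\log(y+\alpha)$.
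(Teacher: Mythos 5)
Your proposal is correct and follows essentially the same route as the paper: the same phase function $f(y)=-(t/2\pi)\log(y+\alpha)$, a dyadic decomposition with partial (Abel) summation to remove the weight $(n+\alpha)^{-1/2}$, the third-derivative test (Lemma \ref{f'''}) on blocks with $N_\nu\le t^{2/3}$ and the second-derivative test (Lemma \ref{f''}) on blocks with $t^{2/3}<N_\nu\ll t$, and a final summation over $O(\log x)$ blocks. The only difference is cosmetic (you index dyadic blocks upward from $1$ rather than downward from $x$), and your explicit remarks on the crossover at $t^{2/3}$ and on where $x\ll t$ is used are accurate.
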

\begin{proof}
First, we consider the following single series and by using Lemma \ref{f'''}
with $ f(x) = -t (2\pi)^{-1} \log{(x + \alpha)} $ . Suppose that $ a+1 \leq b \leq 2a $, then
\begin{eqnarray}
	\sum_{a < n \leq b} (n + \alpha)^{-it}
	= \sum_{a < n \leq b} e^{2\pi i f(n)} 
	&\ll & a \left\{ \frac{t}{(a+\alpha)^3} \right\}^{1/6} + a^{1/2} \left\{ \frac{t}{(a+\alpha)^3}\right\}^{-1/6}	\nonumber \\
	&\ll & a^{1/2} t^{1/6} + a t^{-1/6}.
		\label{a,t}
\end{eqnarray}
Also by using partial summation formula, and by using (\ref{a,t}), then
\begin{eqnarray*}
	&& \sum_{a < n \leq b} (n+ \alpha)^{-1/2-it}  	\\
	&& \qquad = \left\{ \sum_{a < n \leq b} (n+ \alpha)^{-it} \right\} (b-a)^{-1/2} 
			+ \frac12 \int_a^b \left\{ \sum_{a < n \leq \xi} (n+ \alpha)^{-it} \right\} \xi^{-3/2} d \xi	\\
	&& \qquad \ll (a^{1/2} t^{1/6} + a t^{-1/6} ) a^{-1/2}  
			+ \int_a^b ( a^{1/2} t^{1/6} + a t^{-1/6} ) \xi^{-3/2} d \xi	\\
	&& \qquad \ll  t^{1/6} + a^{1/2}t^{-1/6}.
\end{eqnarray*}
If $ a \ll t^{2/3} $, the above evaluation formula is
\begin{equation}
	\sum_{a < n \leq b} (n+ \alpha)^{-1/2-it} \ll  t^{1/6}.
	\label{a<<t^2/3}
\end{equation}
Also if $ t^{2/3} \ll a \ll t $, by using Lemma \ref{f''} with same $ f(x) $ as above method,
then we have
\begin{eqnarray*}
	\sum_{a < n \leq b} (n + \alpha)^{-it}
	&\ll & a \left\{ \frac{t}{(a+\alpha)^2} \right\}^{1/2} + a^{1/2} \left\{ \frac{t}{(a+\alpha)^2}\right\}^{-1/2}	\nonumber \\
	&\ll & t^{3/2} + a t^{-1/2}.
\end{eqnarray*}
Similarly, by using partial summation formula to obtain
\[
	\sum_{a < n \leq b} (n + \alpha)^{-1/2-it} \ll a^{-1/2} t^{1/2} + a^{1/2} t^{-1/2}.
\]
Therefore, also in the case of $ t^{3/2} \ll a \ll t $, the evaluation formula (\ref{a<<t^2/3}) holds.
Furthermore, setting $ (a,b) = (2^{-j}x, 2^{-j+1}x) $ and taking the sum for $ j = 1, 2, 3, \ldots $,
so calculate the sum of $ O(\log{x}) $ terms, then we obtain
\[
	\sum_{0 \leq n \leq x} (n + \alpha)^{-1/2-it} = \sum_{j \geq 1} \sum_{2^{-j}x < n \leq 2^{-j+1}x} (n+\alpha)^{-1/2-it}
	\ll t^{1/6} \log{x}.
\]
\end{proof}

\section{Proof of Theorem \ref{th:Main_Theorem2} and Theorem \ref{th:Main_Theorem3}}

\textbf{Proof of Theorem \ref{th:Main_Theorem2}}.
\begin{enumerate}
\item[(i)] In the case when $ v, w $ are linear independent over $ \Q $.
Setting $ s = 1/2 + it $ on Theorem \ref{th:Main_Theorem1} (i), and note that
\[
	- \frac{\Gamma(1-s)}{(2\pi i)^{1-s}e^{\pi is}} \sim 1
\]
so we have following evaluation,
\begin{eqnarray}
	&& \zeta_2 \left( \frac12+it, \alpha; v,w \right)		\nonumber \\
	&& \ll \mathop{\sum\sum} \limits_{0 \leq m,n \leq x/(v+w)} \frac1{(\alpha+vm+wn)^{1/2+it}} 
			+ \frac1{\sqrt{w}} \sum_{0 \leq m \leq x/(v+w)} \zeta_H^*\left( \frac12 +it, \alpha_{v,m} \right)	\nonumber \\
	&& \qquad \qquad + \frac1{\sqrt{v}} \sum_{0 \leq n \leq x/(v+w)} \zeta_H^*\left( \frac12 +it, \alpha_{w,n} \right)	\nonumber \\
	&& \quad + \frac{2\pi}{\sqrt{v}} \sum_{0 < |n| \leq vy} \frac{e^{-2\pi in(\alpha+wN)/v}}{e^{2\pi inw/v} -1} \cdot \frac1{n^{1/2-it}}
		+ \frac{2\pi}{\sqrt{w}} \sum_{0 < |n| \leq wy} \frac{e^{-2\pi in(\alpha+vN)/w}}{e^{2\pi inv/w} -1} \cdot \frac1{n^{1/2-it}}.
		\qquad
		\label{s=1/2+it}
\end{eqnarray}
We denote the right-hand side by $ S + T_1 + T_2 + U_1 + U_2 $.
To evaluation for double series $ S $  on (\ref{s=1/2+it}), we consider the following single series and by using Lemma \ref{f'''}
with $ g(x) = -t (2\pi)^{-1} \log{(\alpha +vm+wx)} $ and $ (a,b) = (2^{-j}x/(v+w),\ 2^{-j+1}x/(v+w)) $, we have
\begin{eqnarray}
	&& \sum_{0 \leq n \leq x/(v+w)} (\alpha+vm+wn)^{-it}
			= \sum_{j=1}^\infty \sum_{a < n \leq b} e^{2\pi i g(n)} \nonumber \\
	&& \qquad \quad \ll \sum_{j=1}^\infty \{ 2^{-j} x (m+x)^{-1/2} |t|^{1/6} + 2^{-j/2}x^{1/2}(m+x)^{1/2}|t|^{-1/6} \}	\nonumber \\
	&& \qquad \quad \ll x (m+x)^{-1/2} |t|^{1/6} + x^{1/2}(m+x)^{1/2}|t|^{-1/6}.
		\label{x,m,t}
\end{eqnarray}
Also by using partial summation formula, and by using (\ref{x,m,t}), then we have
\begin{eqnarray*}
	&& \sum_{0 \leq n \leq x/(v+w)} (\alpha+vm+wn)^{-1/2-it}	\\
	&& \qquad = \left\{ \sum_{0 \leq n \leq x/(v+w)} (\alpha+vm+wn)^{-it} \right\} \left( \frac{x}{v+w} \right)^{-1/2} \\
	&& \qquad \qquad 
			- \frac12(v+w)^{1/2} 
			\int_1^{x/(v+w)} \left\{ \sum_{0 \leq n \leq \xi/(v+w)} (\alpha+vm+wn)^{-it} \right\} \xi^{-3/2} d \xi	\\
	&& \qquad \ll x^{1/2} (m+x)^{-1/2} |t|^{1/6} + (m+x)^{1/2} |t|^{-1/6}  \\
	&& \qquad \qquad 
			- \int_1^{x/(v+w)} \left\{ \xi^{-1/2}(m+\xi)^{-1/2} |t|^{1/6} + \xi^{-1} (m+\xi)^{1/2} |t|^{-1/6} \right\} d \xi	\\
	&& \qquad \ll  x^{1/2} (m+x)^{-1/2} |t|^{1/6} + |t|^{-1/6} (m+x)^{1/2}  + |t|^{1/6} \log{(\sqrt{x}+ \sqrt{m+x})} 	\\
	&& \qquad \qquad + |t|^{-1/6} \sqrt{m} 
				\left\{ \log{\left( \sqrt{1+\frac{x}{m}} -1 \right)} - \log{\left( \sqrt{1+\frac{x}{m}}+1 \right)} \right\}.
\end{eqnarray*}
Furthermore by calculating the sum on $ m $, so we can evaluated to $ S $ as follows,
\begin{eqnarray}
	S
	& \ll & x^{1/2} |t|^{1/6} \sum_{0 \leq m \leq x} \frac1{\sqrt{m+x}} + |t|^{-1/6} \sum_{0 \leq m \leq x}\sqrt{m+x}  \nonumber \\
	&&+ |t|^{1/6} \sum_{0 \leq m \leq x} \log{(\sqrt{x}+ \sqrt{m+x})} 
						+ |t|^{-1/6} \sum_{0 \leq m \leq x} \sqrt{m} \log{ \left(1+ \frac{2m}{x} \right)} 		\nonumber \\
	& \ll & x |t|^{1/6} + x^{3/2} |t|^{-1/6} + |t|^{1/6} x \log{x} + |t|^{-1/6} x^{3/2}	\nonumber \\
	& \ll & |t|^{1/6} x \log{x} + |t|^{-1/6} x^{3/2}. 	\label{evaluate_S}
\end{eqnarray}
Next, we consider the order of $ T_1 + T_2 $. By using Lemma \ref{AFE_of_zeta_H}, we have
\begin{eqnarray*}
	&& \zeta_H^* \left( \sigma +it, \alpha_{v,m} \right)		\\
	&& \quad =  \zeta_H \left( \sigma +it, \alpha_{v,m} \right) - \sum_{0 \leq n \leq N+n_{v,m}} \frac{1}{(n+\alpha_{v,m})^{\sigma +it}}	\\
	&& \quad \ll  \zeta_H \left( \sigma +it, \alpha_{v,m} \right) - \sum_{0 \leq n \leq x} \frac{1}{(n+\alpha_{v,m})^{\sigma +it}}	\\
	&& \quad = \frac{\Gamma(1-s)}{(2\pi)^{1-s}} 
		\left\{ e^{\pi i(1-s)/2} \sum_{1 \leq n \leq y} \frac{e^{2\pi in(1-\alpha_{v,m})}}{n^{1-s}} 
			+ e^{-\pi i(1-s)} \sum_{1 \leq n \leq y} \frac{e^{2\pi in \alpha_{v,m}}}{n^{1-s}} \right\}		\\
	&& \quad \qquad  + O(x^{-\sigma}) + O(|t|^{1/2-\sigma}y^{\sigma-1})	\\
	&& \quad \ll |t|^{1/2-\sigma} e^{\pi(|t|-t)/2} \left| \sum_{1 \leq n \leq y} \frac{e^{2\pi in(1-\alpha_{v,m})}}{n^{1-s}} \right|
				+ |t|^{1/2-\sigma} e^{\pi(|t|+t)/2} \left| \sum_{1 \leq n \leq y} \frac{e^{2\pi in \alpha_{v,m}}}{n^{1-s}} \right|		\\
	&& \quad \qquad + x^{-\sigma} + |t|^{1/2 - \sigma} y^{\sigma-1}.
\end{eqnarray*}
Suppose that $ \sigma = 1/2 $ and by using Lemma \ref{Hardy-Littlewood_zeta_H}
\[
	\zeta_H^* \left( \frac12 +it, \alpha_{v,m} \right)
	\ll \left| \sum_{1 \leq n \leq y} \frac{e^{2\pi in \alpha_{v,m}}}{n^{1/2 - it}} \right|
	\ll \begin{cases}
		1						& (0 < \alpha_{v,m} < 1),	\\
		|t|^{1/6} \log{y}	& (\alpha_{v,m} = 1).
		\end{cases}
\]
Here, $ \alpha_{v,m} = 1 $ is equivalent to $ (\alpha+vm)/w \in \N $. 
We suppose that exists $ k \in \N $ such as $ (\alpha+vm)/w = k $ that is $ \alpha = kw - mv $.
Also taking $ k', m' $ such that $ \alpha = k'w - m'v $, so $ (k-k')w - (m'-m)v = 0 $ holds.
Since $v, w$ are linearly independent over $ \Q $, so $ (k', m')=(k, m) $. 
That is, there are at most one pair $ (k, m) $ that satisfies $ \alpha = kw-mv $.
Therefore, we have
\begin{eqnarray}
	T_1 + T_2 
	& \ll & \sum_{0 < m \leq x/(v+w)} \zeta_H^* \left( \frac12 +it, \alpha_{v,m} \right)  \nonumber \\
	& \ll & 
	\begin{cases}
		x & (\alpha, v, w \ \mathrm{are\ lin. \ indep. \ over}\ \Q),	\\
		x + |t|^{1/6} \log{y} & (\alpha, v, w\ \mathrm{are\ lin. \ dep.\ over}\ \Q).
	\end{cases}
	\label{evaluate_T}
\end{eqnarray}
Finally, we consider the evaluation of $ U_1 $ and $ U_2 $.
Let $ d(z, \varepsilon) $ be the closed disk whose center is $ z \in \C $ with radius $ \varepsilon > 0 $.
We first note that for any $ \varepsilon_1 > 0 $, then exists $ \delta = \delta(\varepsilon_1) > 0 $ 
such that for $ z \in \bigcup_{n \in \Z} d(2\pi in, \varepsilon_1) $ the inequality
\[
	\left| \frac1{e^z - 1} \right| \leq \delta e^{- \max\{ \mathrm{Re}(z),\ 0\}}
\]
holes.
Also, since $ v,w $ are linearly independent over $ \Q $ thus $ (\alpha + wN)/v \notin \Z $ and 
$ (\alpha + vN)/w \notin \Z $ for any $ N \in \N $, then we have
\begin{eqnarray*}
	&& \left| \sum_{0 < n \leq L} \frac{e^{-2\pi in(\alpha+wN)/v}}{e^{2\pi inw/v} -1} \cdot \frac1{n^{1/2-it}} \right|
	\leq \delta \left| \sum_{n = 1}^\infty \frac{e^{-2\pi in(\alpha+wN)/v}}{n^{1/2 - it}} \right|,	\\
	&& \left| \sum_{0 < n \leq M} \frac{e^{-2\pi in(\alpha+vN)/w}}{e^{2\pi inv/w} -1} \cdot \frac1{n^{1/2-it}} \right|
	\leq \delta \left| \sum_{n = 1}^\infty \frac{e^{-2\pi in(\alpha+vN)/w}}{n^{1/2 - it}} \right| 
\end{eqnarray*}
and each right-hand side series is convergent, so we have
\begin{equation}
	U_1 + U_2 \ll 1.		\label{evaluate_U}
\end{equation}
Since (\ref{evaluate_S}), (\ref{evaluate_T}) and (\ref{evaluate_U}), then we have
\begin{eqnarray*}
	&& \zeta_2 \left( \frac12 + it, \alpha; v,w \right) \ll |t|^{1/6} x \log{x} + |t|^{-1/6} x^{3/2}   \nonumber \\
	&& \qquad \qquad \qquad +
		\begin{cases}
				x & (\alpha, v, w \ \mathrm{are\ lin. \ indep. \ over}\ \Q),	\\
				x + |t|^{1/6} \log{y} & (\alpha, v, w\ \mathrm{are\ lin. \ dep.\ over}\ \Q).
		\end{cases}
\end{eqnarray*}
We consider in the case $ \alpha, v, w $ are linearly independent over $ \Q $. 
Let $ C $ is constant with $ C>1 $, taking $ x = C, y = |t|/2 \pi C $ then 
$ \zeta_2 \left(1/2 + it, \alpha; v,w \right) \ll |t|^{1/6} $.
On the other hand, in the case when $ \alpha, v, w $ are linearly dependent over $ \Q $, 
taking $ x = 2\pi (\log{t})^{1/2}, y = t (\log{t})^{-1/2} $ then,
$ \zeta_2 \left(1/2 + it, \alpha; v,w \right) \ll |t|^{1/6} \log{|t|} $
so we obtain the result of Theorem \ref{th:Main_Theorem2} (i).

\item[(ii)] In the case when $ v, w $ are linear dependent over $ \Q $.
Setting $ s = 1/2 + it $ on Theorem \ref{th:Main_Theorem1} (ii), we have
\begin{eqnarray}
		&& \zeta_2 \left(\frac12 + it, \alpha; v,w \right)		\nonumber \\
		&&  = \mathop{\sum\sum}\limits_{0 \leq m,n \leq N} 
			\frac{1}{(\alpha + vm + wn)^{1/2 + it}}
			+ \frac{1}{w^{1/2+it}}\sum_{m=0}^N \zeta_H^*\left(\frac12 + it,\alpha_{v,m} \right) 			\nonumber \\
		&& \qquad + \frac{1}{v^{1/2+it}}\sum_{n=0}^N \zeta_H^*\left( \frac12 + it, \alpha_{w,n} \right)		\nonumber \\
		&& \qquad -  \frac{\Gamma(1/2 - it)}{(2\pi i)^{1/2-it}e^{\pi i(1/2+it)}} \left\{
			\frac{2\pi}{v^{1/2+it}} 
			\mathop{\sum_{0 < |n| < L}}\limits_{q \; \mid \hspace{-.25em}/ \, n} 
			\frac{e^{-2\pi in(\alpha+wN)/v}}{(e^{2\pi inw/v}-1)n^{-1/2+it}} \right. 		\nonumber \\
		&& \qquad \qquad \qquad \qquad \qquad \qquad \qquad \left. + \frac{2\pi}{w^{1/2+it}}
			\mathop{\sum_{0 < |n| < M}}\limits_{p \; \mid \hspace{-.25em}/ \, n} 
			\frac{e^{-2\pi in(\alpha+vN)/w}}{(e^{2\pi inv/w}-1)n^{-1/2+it}}  
			\vbox to 35pt{} \right\}  \nonumber \\
		&& \qquad
            - \frac{\Gamma(1/2 - it)}{(2\pi i)^{1/2-it}e^{\pi i(1/2+it)}} \cdot
			\frac{q^{-1/2+it}}{2\pi pv^{1/2+it}} \left(\frac12-it \right) \sum_{0 < |n| < M} \frac{e^{-2q\pi in\alpha/v}}{n^{3/2-it}} 
                        \nonumber \\
        && \qquad + \frac{\Gamma(1/2 - it)}{(2\pi i)^{1/2-it}e^{\pi i(1/2+it)}} 
				\left( \frac{\alpha q}{pv^2} + \frac{(p+q)N}{pv} + \frac{vp}{2q} + \frac{v}2 \right) \nonumber \\
        && \qquad \qquad \times 
                	\left( \frac{q}{v} \right)^{-1/2+it} \sum_{0 < |n| < M} \frac{e^{-2q\pi in\alpha /v}}{n^{1/2-it}}  \nonumber \\
        && \qquad + O(x^{-1/2}).
\end{eqnarray}
The sum of the first term to fourth term on the right-hand side is evaluated as $ \ll |t|^{1/6} \log{|t|} $ 
by using the same method as in (i).
We denote the fifth term and the sixth term on the right-hand side by $ V_1 $ and $ V_2 $, respectively.
By using the Stirling formula, we obtain
\[
		\frac{\Gamma(1/2 - it)}{(2\pi i)^{1/2-it}e^{\pi i(1/2+it)}} = 1 + O(t^{-1})
\]
so 
\[
		|V_1| = \frac{1}{2\pi p \sqrt{qv}} \left| \sum_{0 < |n| < M} \frac{e^{2q \pi in\alpha /v}}{n^{3/2 -it}} \right|  t + O(1).
\]
Also, since $ N \asymp x $ and
\[
	\sum_{0<|n|<M} \frac{e^{-2q\pi i n \alpha /v}}{n^{1/2-it}} \ll x |t|^{1/6} \log{|t|}
\]
is established, then $ V_2 \ll x |t|^{1/6} \log{|t|} $. Therefore, we have
\[
		\zeta_2 \left( \frac12 +it, \alpha; v,w \right) = \kappa t + O(|t|^{1/6} \log{|t|}) 
\]
where $ \kappa = \kappa(t) $ is constant that depends on $ t $ with
\[
		0 < |\kappa| < \frac{1}{2\pi p \sqrt{qv}} \left| \zeta_L \left( \frac32 - it, 1, 1- \frac{q}{v} \alpha  \right) \right|.
\]
\end{enumerate}
\qed

\bigskip

\begin{remark}
The order of $ \zeta_2(1/2 +it, \alpha; v,w) $ greatly different between when $ v, w $ are linearly dependent over $ \Q $
and it is not so. For example, considering the special case of $ v = w = 1 $, since
\[
	\zeta_2(s, \alpha; 1,1) = (1-\alpha) \zeta_H(s, \alpha) + \zeta_H(s-1, \alpha)
\]
holes (See in \cite{SC}, p. 86) and $ \zeta_H (\sigma+it, \alpha) = O(t^{1/2-\sigma}) \ (\sigma < 0) $
is well-known, in the above equation, let $ s = 1/2 + it $ then the second term on the right-hand side is
$ \zeta_H(-1/2 + it, \alpha) = O(t) $. 
From this, we can see that $ \zeta_2(1/2+it, \alpha; 1,1) $ is linear expression of $ t $.
\end{remark}

\bigskip

\textbf{Proof of Theorem \ref{th:Main_Theorem3}}.
Setting $ s = 3/2 + it $ on Theorem \ref{order_of_Barnes_zeta}, then
\begin{eqnarray*}
	&& \zeta_2 \left( \frac32+it, \alpha; v,w \right)
			= \mathop{\sum\sum}\limits_{0 \leq m,n \leq x} 
				(\alpha + vm + wn)^{-3/2-it}		\nonumber \\
	&& \qquad\qquad	+ \frac{(\alpha + vx)^{1/2-it} + (\alpha + wx)^{1/2-it} - (\alpha + vx + wx)^{1/2-it}}
				{-vw(1/2+it)(1/2-it)}  + O(x^{-1/2}).
\end{eqnarray*}
To evaluation for double series on the right-hand side of the above, we consider the following single series 
and by using Lemma \ref{f'''} with same $ g(x) $ and $(a,b)$ in the proof of Theorem \ref{th:Main_Theorem2}(i),
then we have
\begin{eqnarray*}
	\sum_{0 \leq n \leq x} (\alpha+vm+wn)^{-it}
	&=& \sum_{j=1}^\infty \sum_{a < n \leq b} e^{2\pi i g(n)} \nonumber \\
	&\ll &  x (m+x)^{-1/2} |t|^{1/6} + x^{1/2}(m+x)^{1/2}|t|^{-1/6}.
		\label{x,m,t2}
\end{eqnarray*}
Similarly, by using partial summation formula and by using (\ref{x,m,t2}), then
\begin{eqnarray*}
	&& \sum_{0 \leq n \leq x/(v+w)} (\alpha+vm+wn)^{-1/2-it}	\\
	&& \quad \ll  x^{-1/2} (m+x)^{-1/2} |t|^{1/6} + x^{-1}|t|^{-1/6} (m+x)^{1/2}  + x^{-1/2} |t|^{1/6} (m+x)^{1/2} m^{-1} 	\\
	&& \qquad - |t|^{-1/6} \left( \frac{\sqrt{m+x}}{x} \frac{1}{\sqrt{m}} +
				\left\{ \log{\left( \sqrt{1+\frac{x}{m}} -1 \right)} - \log{\left( \sqrt{1+\frac{x}{m}}+1 \right)} \right\} \right).
\end{eqnarray*}
Furthermore, by calculating the sum on $ m $, so we can evaluated this series as follows,
\[
	\mathop{\sum\sum}\limits_{0 \leq m,n \leq x} (\alpha+wm+wn)^{-3/2 - it} \ll |t|^{1/6} + x^{1/2} |t|^{-1/6}.
\]
Therefore, we have
\[
	\zeta_2 \left( \frac32+it, \alpha; v,w \right)
	\ll |t|^{1/6} + x^{1/2} |t|^{-1/6} + x^{1/2} |t|^{-2} + x^{-1/2}.
\]
Taking $ x \asymp |t| $ then $ \zeta_2 \left( 3/2+it, \alpha; v,w \right) \ll |t|^{1/3} $,
 so proof of Theorem \ref{th:Main_Theorem3} is complete.
\qed

\bigskip

\begin{remark}
In the above proof by used Theorem \ref{order_of_Barnes_zeta},
but if we use Theorem \ref{th:Main_Theorem1}, we have only weak results of
$ \zeta_2 \left(3/2+it, \alpha; v,w \right) \ll |t|^{2/3} $.
From this, Theorem \ref{order_of_Barnes_zeta} is more effective when
$ s = 3/2 + it $.
\end{remark}

\section*{Acknowledgments}
First of all, I would like to express my deepest gratitude to my academic advisor
Prof. Kohji Matsumoto for his valuable advice and guidance. 
I also sincerely thank Prof. Takashi Nakamura, Prof. Yayoi Nakamura and Mr. Yuta Suzuki
for their valuable advice and for lots of useful conversations.


\bigskip 
\author{Takashi Miyagawa}:	\\
Faculty of Economy and Information Science, \\
Onomichi City University, \\
1600 Hisayamada-cho Onomichi, 722-8506, Japan \\
E-mail: miyagawa@onomichi-u.ac.jp


\end{document}